\theoremstyle{plain}
\newtheorem{theorem}{Theorem}[section]
\newtheorem{lemma}[theorem]{Lemma}
\newtheorem{proposition}[theorem]{Proposition}
\newtheorem{corollary}[theorem]{Corollary}
\theoremstyle{definition}
\theoremstyle{remark}
\def\gh{\Gamma _H(G)}
\def\pgh{\mathcal{P}(\frac{G}{H})}
\def\fgh{\frac{G}{H}}
\begin{document}
\title{Genus and crosscap of Normal subgroup based power graph of finite groups}


\author[Parveen, Manisha, Jitender Kumar]{Parveen, Manisha, Jitender Kumar$^{*}$}
\address{$\text{}^1$Department of Mathematics, Birla Institute of Technology and Science Pilani, Pilani-333031, India}
\email{p.parveenkumar144@gmail.com,yadavmanisha2611@gmail.com,jitenderarora09@gmail.com}

\begin{abstract}
Let $H$ be a normal subgroup of a group $G$. The normal subgroup based  power graph $\Gamma_H(G)$ of $G$ is the simple undirected graph with vertex set $V(\Gamma_H(G))= (G\setminus H)\cup \{e\}$ and two distinct vertices $a$ and $b$ are adjacent if either $aH = b^m H$ or $bH=a^nH$ for some $m,n \in \mathbb{N}$. Mohammadi and Ashrafi (Normal subgroup based power graph of finite groups, \textit{Ricerche di Matematica}, 71(02), 549–559, 2022) studied the normal subgroup based power graph of a finite group. In this paper, we continue the study of normal subgroup based power graph and characterize all the pairs $(G,H)$, where $H$ is a non-trivial normal subgroup of $G$, such that the genus of $\Gamma_H(G)$ is at most $2$. Moreover, we determine all the subgroups $H$ and the quotient groups $\frac{G}{H}$ such that the cross-cap of $\Gamma_H(G)$ is at most three.

 \end{abstract}

\subjclass[2020]{05C25}

\keywords{power graph, genus and cross-cap of a graph. \\ *  Corresponding author}

\maketitle
\section{Historical Background and Main results}

 Graphs associated to finite groups have been studied extensively in the literature. In particular, Cayley graphs, commuting graphs and the power graphs have received considerable attention from various researchers due to their valuable applications (see \cite{a.hayat2019novel,kelarev2003graph,a.kelarev2009cayley,kelarev2004labelled}).
 The \emph{power graph} $\mathcal{P}(G)$ of a finite group $G$ is a simple undirected graph with vertex set $G$ and two vertices $a$, $b$ are adjacent if one of them is a power of the other. 
 Topological graph theory mainly focuses on the representation of graphs on surfaces so that no two edges cross each other. Its applications lies in electronic circuit design, where the main purpose is to embed a circuit (represented as a graph) on a circuit board (the surface) without any two connections crossing each other, which could cause a short circuit. A graph is said to be \emph{embeddable} on a topological surface if it can be drawn on a surface so that no two edges cross each other.
 The \emph{genus} $\gamma(\Gamma)$ of a graph $\Gamma$ is the minimum non-negative integer $g$ such that the graph can be embedded in a sphere with $g$ handles. The problem of calculating the genus of a graph is NP-hard (cf. \cite{a.Thomassen1989}). The \emph{cross-cap} $\overline{\gamma}(\Gamma)$ of a graph $\Gamma$, is the minimum non-negative integer $k$ such that $\Gamma$ can be embedded in $\mathbb{N}_k$, where $\mathbb{N}_k$ is a nonorientable surface with $k$ crosscaps.  Mirzargar \emph{et al.}  \cite{a.Mirzargar2012},  classified all the finite groups whose power graphs are planar. In \cite{a.Doostabadi2017embedding}, the authors characterized the finite groups whose power graphs are of (non)orientable genus one. All the finite groups with (non)orientable genus two power graphs have been characterized in  \cite{a.ma2019power}. The power graphs of finite groups have been studied through various aspects. For more detail on the power graph, we refer the reader to the survey paper \cite{a.kumar2021} and references therein.

Bhuniya and Bera \cite{a.bhuniya2017normal} introduced the normal subgroup based power graph. Let $G$ be a finite group and let $H$ be a normal subgroup of $G$.  The \emph{normal subgroup based power graph} $\Gamma _H(G)$ of $G$ based on the
normal subgroup $H$ is a simple undirected graph with vertex set is $(G\setminus H)\cup \{e\}$ and two distinct vertices $x$ and $y$ are adjacent if and only if $x H = y^m H $ or $y H = x^n H$, for some integers $m$ and $n$. It is a generalization of the usual  power graph in which $H=\{e\}$. They classified all the pairs of group $(G,H)$ of a finite group $G$ and a non-trivial normal subgroup $H$ such that $\gh$ is complete, bipartite, planar, Eulerian and perfect, respectively. Recently, Mohammadi and  Ashrafi \cite{a.malek2022normla} classified of all the pairs $(G,H)$ of a finite group $G$ and a non-trivial proper normal subgroup $H$ of $G$ such that $\gh$ is a split, bisplit and $(n-1)$-bisplit. They proved that the graph $\gh$ is bisplit if and only if $|H|=2$ and $\fgh \cong \mathbb{Z}_2\times \mathbb{Z}_2 \times \cdots \times \mathbb{Z}_2$. Further, they characterized all the pairs $(G,H)$ such that $\gh$ is unicyclic, bicyclic, tricyclic and tetracyclic, respectively. They showed that the graph $\gh$ is unicyclic if and only if $|H|=2$ and $G \cong \mathbb{Z}_4$ or $\mathbb{Z}_2\times \mathbb{Z}_2$. The graph $\gh$ is tricyclic if and only if $|H|= 2$ and $\fgh \cong \mathbb{Z}_2\times \mathbb{Z}_2$ or $|H|= 3$ and $\fgh \cong \mathbb{Z}_2$.

 
Motivated by the above mentioned work on normal subgroup based power graph of a finite group, in this paper, we classify all the pairs $(G,H)$ of a finite group $G$ and a non-trivial proper normal subgroup $H$ of $G$ such that the graph $\gh$ is of genus at most $2$. Additionally, we determine all the subgroups $H$ and the quotient groups $\frac{G}{H}$ such that the cross-cap of $\gh$ is at most three. The main results of this paper are as follows.

\newpage
\begin{theorem}{\label{genus}}
    Let $G$ be a finite group and let $H$ be a proper non-trivial normal subgroup of $G$. Then
    \begin{itemize}
        \item[(i)] $\gamma(\gh)=1$ if and only if one of the following holds: 
        \begin{itemize}
            \item[(a)] $|H|\in \{4,5,6\}$ and $\fgh \cong \mathbb{Z}_2$.
            \item[(b)]  $|H|=3 $ and $\fgh \cong \mathbb{Z}_3$ or $\fgh \cong S_3$.
            \item[(c)]  $|H|=2 $ and $\fgh$ is isomorphic to one of the groups : $\mathbb{Z}_3$, $ S_3$, $ D_8$, $\mathbb{Z}_4$.
        \end{itemize}
         \item[(ii)] $\gamma(\gh)=2$ if and only if one of the following holds: 
        \begin{itemize}
            \item[(a)] $|H| = 7$ and $\fgh \cong \mathbb{Z}_2$.
             \item[(b)]  $|H|=2 $ and $\fgh$ is a $2$-group with exponent $4$ such that $\fgh$ has exactly two cyclic subgroups of order $4$.
        \end{itemize}
    \end{itemize}
\end{theorem}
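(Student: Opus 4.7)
The approach is structural: the graph $\gh$ is a blow-up of the power graph $\pgh$ of the quotient. Each non-identity coset $gH$ induces a clique of size $|H|$ in $\gh$; the vertex $e$ is universal; and between distinct non-identity cosets $gH,g'H$ the induced bipartite subgraph is complete if $gH$ and $g'H$ are adjacent in $\pgh$, and empty otherwise. In particular, every cyclic subgroup of order $n$ of $G/H$ yields a complete subgraph $K_{1+(n-1)|H|}$ in $\gh$, which combined with $\gamma(K_8)=2$ and $\gamma(K_9)=3$ forces $(n-1)|H|\leq 7$ whenever $\gamma(\gh)\leq 2$. This bounds $|H|$ by $7$ and severely restricts the exponent of $G/H$.

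The plan is to split the argument on $|H|\in\{2,3,4,5,6,7\}$ and use the bound above to determine the admissible isomorphism types of $G/H$. For $|H|\in\{4,5,6,7\}$ the bound yields $n\leq 2$, so $G/H\cong\mathbb{Z}_2^k$, $\pgh$ is a star, and $\gh$ is the one-point-union of $2^k-1$ copies of $K_{|H|+1}$ through $e$. By additivity of genus over blocks, $\gamma(\gh)=(2^k-1)\,\gamma(K_{|H|+1})$, which immediately produces (i)(a) and (ii)(a). The case $|H|=3$ is handled analogously by splitting on whether $G/H$ has elements only of order $2$, only of order $3$, or of both orders: each order-$3$ cyclic subgroup of $G/H$ contributes an independent $K_7$-block through $e$, so having more than one such block forces $\gamma\geq 2$, leaving only $\mathbb{Z}_3$ and $S_3$ as possibilities.

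The main obstacle is the case $|H|=2$ with $G/H$ a $2$-group of exponent $4$. Each cyclic subgroup of order $4$ in $G/H$ contributes a $K_7$-subgraph in $\gh$; two such subgraphs share at most the triangle $K_3$ on $e$ and the coset of the common involution. Using the Euler inequality $\gamma\geq E/6-V/2+1$, I would show that three or more such $K_7$'s meeting through $e$ or through a common triangle force $\gamma\geq 3$, thereby ruling out $Q_8$ and every $2$-group of exponent $4$ with at least three cyclic subgroups of order $4$. The delicate remaining step is to verify that when there are exactly two cyclic subgroups of order $4$, the graph $\gh$---consisting of two $K_7$'s sharing a triangle through $e$, together with pendant $K_3$-blocks at $e$ coming from the remaining involution cosets---has genus exactly $2$. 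The lower bound follows from the Euler inequality; the upper bound requires an explicit rotation system on the double torus, obtained by starting from the unique toroidal embedding of $K_7$, adjoining a handle, and fitting the second $K_7$ into the appropriate faces. Block-additivity then absorbs the pendant $K_3$'s without increasing the genus, yielding case (ii)(b).
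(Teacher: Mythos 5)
Your overall strategy is the same as the paper's: view $\gh$ as a blow-up of $\pgh$ with a universal vertex $e$, split on $|H|$, extract complete subgraphs to get lower bounds, use block-additivity of the genus, and finish the exponent-$4$ case with an Euler-formula lower bound plus an explicit genus-$2$ embedding of two $K_7$'s sharing a triangle (the paper's $K_3\vee 2K_4$, Figure 1). However, there are concrete gaps. First, your central quantitative claim --- that a cyclic subgroup of order $n$ in $\fgh$ yields $K_{1+(n-1)|H|}$ --- is false whenever $n$ is not a prime power, because the power graph of $\mathbb{Z}_n$ is complete only for prime-power $n$; in $\mathcal{P}(\mathbb{Z}_6)$ the element of order $2$ is not adjacent to the elements of order $3$. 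So the bound $(n-1)|H|\leq 7$ is not justified as stated, and the exclusion of order-$6$ elements (which does occur, e.g.\ for $|H|=2$) must instead use the clique $\{e\}\cup aH\cup a^2H\cup a^4H\cup a^5H$ on the two generators together with the two elements of order $3$, exactly as the paper does in its Subcase 1.2. The conclusion survives, but the argument as written does not.

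Second, your dichotomy ``exactly one order-$3$ subgroup versus more than one'' is not enough to prove the \emph{only if} direction of part (ii). If $\fgh$ had exactly two cyclic subgroups of order $3$ (with $|H|\in\{2,3\}$ and no larger element orders), $\gh$ would be two $K_5$'s or $K_7$'s glued at $e$ and would have genus exactly $2$, which would have to appear in (ii)(b) but does not. You need the Frobenius congruence $s_3(\fgh)\equiv 1 \pmod 3$ (Theorem \ref{number of cyclic subgroup p}) to jump from ``one'' directly to ``at least four'' (whence genus $\geq 4$), and you need a classification lemma (the paper's Lemma \ref{uniquesubgroup3}) to conclude that a unique order-$3$ subgroup forces $\fgh\cong\mathbb{Z}_3$ or $S_3$; neither appears in your plan. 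Third, for $|H|=2$ your treatment of the exponent-$4$ case skips the subcase of \emph{exactly one} cyclic subgroup of order $4$, which is precisely where $\mathbb{Z}_4$ and $D_8$ of part (i)(c) come from and which requires the classification of $2$-groups with a unique cyclic subgroup of order $4$ (the paper's Lemma \ref{lemma 4}, resting on Corollary \ref{pgroupclasscorollary}). Finally, the genus-$2$ embedding of $K_3\vee 2K_4$ is the crux of (ii)(b) and is only described, not produced; the paper supplies it explicitly.
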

\begin{theorem}{\label{cross-cap}}
      Let $G$ be a finite group and let $H$ be a proper non-trivial normal subgroup of $G$. Then
      \begin{itemize}
          \item[(i)] $\overline{\gamma}(\gh)=1$ if and only if one of the following holds: 
         \begin{itemize}
            \item[(a)]  $|H|\in \{4,5\}$ and $\fgh \cong \mathbb{Z}_2$.
            \item[(b)]  $|H|=2 $ and $\fgh \cong \mathbb{Z}_3$ or $\fgh \cong S_3$.
        \end{itemize}
          \item[(ii)] the graph $\gh$ cannot have cross-cap $2$. 
          \item[(iii)] $\overline{\gamma}(\gh)=3$ if and only if one of the following holds: 
           \begin{itemize}
             \item[(a)] $|H|= 6 $ and $\fgh \cong \mathbb{Z}_2$.
             \item[(b)] $|H|\in \{4,5\}$ and $\fgh \cong \mathbb{Z}_2\times \mathbb{Z}_2$.
             \item[(c)] $|H|= 3 $ and $\fgh \cong \mathbb{Z}_3$ or $\fgh \cong S_3$.
             \item[(d)] $|H|= 2 $ and $\fgh \cong \mathbb{Z}_4$ or $\fgh \cong D_8$.
           \end{itemize}
     \end{itemize}
\end{theorem}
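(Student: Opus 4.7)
The plan is to use the structural description of $\Gamma_H(G)$ as a ``blow-up'' of the power graph of $G/H$: the identity $e$ is a universal vertex, each non-identity coset $gH$ spans an $|H|$-clique, and two distinct non-identity cliques $aH$ and $bH$ are fully joined precisely when $\overline{a}\sim\overline{b}$ in $\mathcal{P}(G/H)$. Consequently, every cyclic subgroup of $G/H$ of order $n$ lifts to a clique of size $|H|(n-1)+1$ inside $\Gamma_H(G)$, and the isomorphism type of $\Gamma_H(G)$ depends only on $|H|$ and on $G/H$.

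The proof proceeds in two stages. In the reduction stage I would combine Ringel's values $\overline{\gamma}(K_n)=\lceil(n-3)(n-4)/6\rceil$ for $n\ge 5$ (with the exception $\overline{\gamma}(K_7)=3$) with the Euler-type inequality $|E|\le 3|V|-6+3k$ valid for every graph embedded in $\mathbb{N}_k$. The clique bound $|H|(n-1)+1\le 7$ needed for $\overline{\gamma}\le 3$ forces $|H|\le 7$, restricts the exponent of $G/H$ to at most $4$, and, when $|H|\ge 3$, forces this exponent to equal $2$. Inserting these restrictions into the edge count and the vertex count $|V|=1+|H|(|G/H|-1)$ of the blow-up cuts the admissible pairs $(|H|,G/H)$ down to a short explicit list.

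In the identification stage I would compute $\overline{\gamma}(\Gamma_H(G))$ exactly in each remaining case. When $\mathcal{P}(G/H)$ is itself complete --- equivalently, $G/H$ is cyclic of prime-power order --- $\Gamma_H(G)$ equals $K_m$ with $m=|H|(|G/H|-1)+1$, and Ringel's formula gives the cross-cap directly; this handles case (i)(a), case (iii)(a), and the $\mathbb{Z}_3$/$\mathbb{Z}_4$ sub-cases of (iii)(c)/(iii)(d). For the remaining cases $G/H\in\{S_3,D_8,\mathbb{Z}_2\times\mathbb{Z}_2\}$, I would pair an explicit upper-bound embedding in $\mathbb{N}_k$, built from a rotation scheme in which the non-cyclic cosets appear as pendant cliques inserted into a face incident to $e$ of a known optimal embedding of the ``core'' complete subgraph, with a matching lower bound. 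The lower bound is either a $K_5$ or a $K_7$ lying inside $\Gamma_H(G)$ (coming from a cyclic subgroup of $G/H$ of order $3$ or $4$, as in the $S_3$ and $D_8$ sub-cases) or, in the $\mathbb{Z}_2\times\mathbb{Z}_2$ sub-cases, block-additivity of the non-orientable genus applied to the wedge of three copies of $K_{|H|+1}$ at $e$ (since $\mathcal{P}(\mathbb{Z}_2\times\mathbb{Z}_2)$ is a star and the blow-up decomposes along the cut-vertex $e$).

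Part (ii) then follows for free from the classification: every admissible graph either satisfies $\overline{\gamma}\le 1$ or contains $K_7$ (forcing $\overline{\gamma}\ge 3$), so $\overline{\gamma}=2$ never occurs. The main obstacle I expect is the lower bound in the $\mathbb{Z}_2\times\mathbb{Z}_2$ cases, where no $K_7$ is present: invoking Stahl's block-additivity theorem for cross-cap requires verifying that each $K_{|H|+1}$-block actually realises its non-orientable genus within the wedge rather than admitting a more economical orientable embedding. The upper-bound embeddings are comparatively routine once the blow-up structure is drawn, since each pendant clique shares only the vertex $e$ with the core $K_m$-block and can be accommodated in a face of the chosen embedding incident to $e$.
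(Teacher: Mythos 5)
Your proposal follows essentially the same route as the paper: exploit the blow-up structure of $\Gamma_H(G)$ over $\mathcal{P}(G/H)$ to bound $|H|$ and $\mathrm{exp}(G/H)$ via complete subgraphs and Euler-type edge counts, then settle each surviving pair $(|H|, G/H)$ by combining Ringel's formulas, explicit embeddings, and the Stahl--Beineke block-additivity theorem at the cut-vertex $e$ (which, as stated, already resolves your worry about blocks preferring an orientable embedding, via the $\mu(B_i)=\max\{2-2\gamma(B_i),\,2-\overline{\gamma}(B_i)\}$ term). One small correction to your summary of part (ii): the graphs $K_1\vee 3K_{|H|}$ arising from $\fgh\cong\mathbb{Z}_2\times\mathbb{Z}_2$ with $|H|\in\{4,5\}$ have cross-cap $3$ but contain no $K_7$, so the exclusion of cross-cap $2$ there rests on the block-additivity computation rather than on a $K_7$ subgraph.
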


\section{Preliminaries}
In this section, we recall essential definitions and results. 
Let $G$ be a group with the identity element $e$. The order $o(x)$ of an element $x\in G$ is the smallest positive integer $n$ satisfying  $x^n=e$. We write $\pi _G=\{o(g): g \in G\}$. The least common multiple of the orders of all the elements of $G$  is called the \emph{exponent} of $G$ and it is denoted by $\mathrm{exp}(G)$.  For $d\in \pi _G$, by $s_d(G)$ we mean the number of cyclic subgroups of order $d$ in $G$. A finite group $G$ is called a \emph{p-group} if $|G|=p^{\alpha}$ for some prime $p$ and integer $\alpha$.  By $\langle x, y\rangle$, we mean the subgroup of $G$ generated by $x$ and $y$. Let $H, K $ be subgroups of a group $G$. The subgroup $[H, K] $ of $ G$ is defined as the subgroup generated by all elements of the form $[h, k]:=h^{-1} k^{-1} h k$, where $h \in H, k \in K$. The \emph{lower central series} of $G$ is the descending sequence
$$
G^0 \geq G^1 \geq G^{2} \geq \cdots \geq G^{i} \geq G^{i+1} \geq \cdots
$$
of normal subgroups of $G$, where $G^0 = G$,  $G^1 = [G, G]$ and $G^{i+1} = \left[G^{i}, G\right]$ for $i \in \mathbb{N}$.
Let $G$ be a finite $p$-group of order $p^{n}$. Then $G$ is said to be of \emph{maximal class} if $ G^{n-1} \neq\left\{e\right\}$ and $ G^{n}=\left\{e\right\}$.
The following results are useful in the sequel.

\begin{theorem}{\rm \cite{a.pgroupberkovi,b.pgroupisaac2006,a.pgroupkulakoff,a.pgroupmiller}}{\label{pgroupclass}}
Let $G$ be a finite $p$-group of exponent $p^{k}$. Assume that $G$ is not cyclic for an odd prime $p$, and for $p=2$ it is neither cyclic nor of maximal class. Then
\begin{enumerate}
    \item[(i)] $s_p(G) \equiv 1+p\left(\bmod \ p^{2}\right)$.
    \item[(ii)] $p \mid s_{p^i}(G)$ for every $2 \leq i \leq k$.
\end{enumerate}
\end{theorem}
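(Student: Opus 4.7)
The plan is to prove both congruences by induction on $|G|$, with the elementary abelian case as the anchor and the inductive step driven by conjugation-action counts on subgroups of the prescribed order. For $G \cong (\mathbb{Z}/p)^n$ with $n \geq 2$, the subgroups of order $p$ are precisely the $1$-dimensional $\mathbb{F}_p$-subspaces of $G$, so
\[
s_p(G) = \frac{p^n-1}{p-1} = 1+p+p^2+\cdots+p^{n-1} \equiv 1+p \pmod{p^2},
\]
and (ii) is vacuous since $\mathrm{exp}(G)=p$. This anchors the induction.

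For the inductive step of part (i), I would pick a central subgroup $Z = \langle z\rangle$ of order $p$ (which exists in any nontrivial $p$-group) and analyze how order-$p$ subgroups of $G$ distribute relative to $Z$. The conjugation action of $G$ on the set of order-$p$ subgroups has orbits of size $1$ (exactly the normal order-$p$ subgroups) or size divisible by $p$, so
\[
s_p(G) \equiv \#\{\text{normal subgroups of order } p\} \pmod p.
\]
Refining this modulo $p^2$ requires pairing each non-normal order-$p$ subgroup with its full conjugacy class and then invoking the hypothesis that $G$ is non-cyclic (and, for $p=2$, not of maximal class) to guarantee that the central elementary abelian section $\Omega_1(Z(G))$ has rank at least $2$; this forces exactly $p+1$ normal order-$p$ subgroups in the centre, yielding the required $1+p$ contribution modulo $p^2$.

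For part (ii), the conjugation action on cyclic subgroups of order $p^i$ with $i\geq 2$ again has orbits of size $1$ or divisible by $p$, fixed orbits corresponding to normal cyclic subgroups of order $p^i$. The hypothesis that $G$ is neither cyclic nor (for $p=2$) of maximal class precisely excludes the structures in which a normal cyclic subgroup of index at most $p$ dominates the count; under these hypotheses every orbit of non-normal cyclic $p^i$-subgroups has size divisible by $p$, and the number of normal ones can itself be shown to be a multiple of $p$ via passage to the quotient by a central order-$p$ subgroup. This gives $p \mid s_{p^i}(G)$, and descent via a non-cyclic maximal subgroup $M$ of index $p$ (whose existence is also secured by the hypotheses) closes the induction.

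The main obstacle will be justifying the exceptional exclusions. For $p=2$ each of $D_{2^n}$, $Q_{2^n}$, $SD_{2^n}$ contains a unique cyclic subgroup of index $2$, whose internal cyclic subchain contributes odd counts to several $s_{2^i}$ and thereby breaks congruence (ii); verifying that every non-exceptional $2$-group admits a non-cyclic maximal subgroup to which the induction hypothesis applies is the delicate step, and the same analysis explains why the elementary abelian section must have rank at least $2$ in (i). Since the theorem is a classical result of Miller, Kulakoff, Berkovich, and Isaacs, the paper simply invokes the cited references rather than reproducing the full $p$-group computation, and I would do the same in practice.
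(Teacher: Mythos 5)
The paper never proves this statement: Theorem \ref{pgroupclass} is quoted as a classical result of Miller, Kulakoff, Berkovich and Isaacs, with citations in place of a proof, exactly as you anticipate in your closing remark. So the only question is whether your sketch could stand on its own, and it cannot, because its central structural claim is false. You assert that non-cyclicity (plus, for $p=2$, not being of maximal class) forces $\Omega_1(Z(G))$ to have rank at least $2$, and hence that there are exactly $p+1$ normal subgroups of order $p$. Extraspecial $p$-groups refute this: the Heisenberg group of order $p^3$ and exponent $p$ (for odd $p$), the modular group $M_{p^3}$, and for $p=2$ the extraspecial groups of order $32$ (which have class $2$, hence are not of maximal class) all satisfy the hypotheses of the theorem yet have cyclic center of order $p$. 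Since a normal subgroup of order $p$ in a $p$-group is necessarily central, these groups have exactly \emph{one} normal subgroup of order $p$, not $p+1$. In them the congruence $s_p(G)\equiv 1+p \pmod{p^2}$ holds through a different mechanism: the non-normal order-$p$ subgroups fall into conjugacy classes of size exactly $p$, and it is the number of such classes that supplies the missing $p$. This exposes the second, more basic flaw: orbit counting gives each non-normal class a size divisible by $p$, but not by $p^2$, so knowing the normal count modulo $p^2$ tells you nothing about $s_p(G)$ modulo $p^2$ unless you can also control the number of orbits of size exactly $p$ — and your sketch contains no device for doing that.

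Part (ii) has the same character. The statement that ``the number of normal cyclic subgroups of order $p^i$ can itself be shown to be a multiple of $p$ via passage to the quotient by a central order-$p$ subgroup'' is a gesture, not an argument: a cyclic subgroup of order $p^i$ does not in general survive as a cyclic subgroup of order $p^i$ in $G/Z$, and matching subgroups of $G$ with subgroups of the quotient is precisely where Kulakoff-type theorems require their real work (Hall's enumeration principle, or counting solutions of $x^{p^i}=1$ à la Frobenius, or the detailed analysis of maximal subgroups in Berkovich's treatment). Your base case and the bare orbit-counting observation $s_p(G)\equiv \#\{\text{normal order-}p\text{ subgroups}\} \pmod p$ are correct, but everything between them and the stated congruences is missing or wrong, so the sketch does not constitute a proof; deferring to the cited literature, as the paper does, is the right call.
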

\begin{corollary}{\rm \cite{a.mishra2021lambda}}{\label{pgroupclasscorollary}}
Let $G$ be a finite $p$-group of exponent $p^{k}$. Then $s_{p^{i}}(G)=1$, for some $1 \leq i \leq k$, if and only if one of the following occurs:
\begin{enumerate}
    \item $G \cong \mathbb{Z}_{p^{k}}$ and $s_{p^{j}}(G)=1$ for all $1 \leq j \leq k$, or
    \item $p=2$ and $G$ is isomorphic to one of the following $2$-groups:
\end{enumerate}
\begin{enumerate}
    \item[(i)] dihedral $2$-group
$$
D_{2^{k+1}}=\left\langle x, y: x^{2^{k}}=1, y^{2}=1, y^{-1} x y=x^{-1}\right\rangle, \quad(k \geq 1)
$$
where $s_2(G)=1+2^{k}$ and $s_{2^{j}}(G)=1 \text{ for all } (2 \leq j \leq k)$.
\item[(ii)] generalized quaternion $2$-group
$$
Q_{2^{k+1}}=\left\langle x, y: x^{2^{k}}=1, x^{2^{k-1}}=y^{2}, y^{-1} x y=x^{-1}\right\rangle, \quad(k \geq 2)
$$
where $s_4(G)=1+2^{k-1}$ and $s_{2^{j}}(G)=1$ for all $1 \leq j \leq k$ and $j \neq 2$.
\item[(iii)] semi-dihedral $2$-group
$$
SD_{2^{k+1}}=\left\langle x, y: x^{2^{k}}=1, y^{2}=1, y^{-1} x y=x^{-1+2^{k-1}}\right\rangle, \quad(k \geq 3)
$$
where $s_2(G)=1+2^{k-1}, \; s_4(G)=1+2^{k-2}$ and $s_{2^{j}}(G)=1$ for all $3 \leq j \leq k$.
\end{enumerate}
\end{corollary}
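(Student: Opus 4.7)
The plan is to derive the corollary from Theorem~\ref{pgroupclass} together with the classical classification of finite $2$-groups of maximal class. The key dichotomy furnished by Theorem~\ref{pgroupclass} is that either $G$ is cyclic, or (for $p=2$) of maximal class, or else $s_p(G)\geq 1+p\geq 3$ and $s_{p^j}(G)\geq p\geq 2$ for every $2\leq j\leq k$. Thus, as soon as any $s_{p^i}(G)$ equals $1$, the group $G$ is forced into one of the ``exceptional'' classes.

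For the ``only if'' direction I would assume $s_{p^i}(G)=1$ for some $1\leq i\leq k$. If $G$ were non-cyclic and, when $p=2$, not of maximal class, Theorem~\ref{pgroupclass}(i) would give $s_p(G)\equiv 1+p\pmod{p^2}$ and hence $s_p(G)\geq 3$, while Theorem~\ref{pgroupclass}(ii) would give $s_{p^j}(G)\geq p\geq 2$ for every $2\leq j\leq k$, contradicting the hypothesis. Hence $G$ is cyclic, or else $p=2$ and $G$ is of maximal class. In the first case $G\cong\mathbb{Z}_{p^k}$, and because a cyclic group has a unique subgroup of each divisor order, $s_{p^j}(G)=1$ for every $1\leq j\leq k$. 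In the second case, the classical Burnside--Suzuki classification of maximal-class $2$-groups identifies $G$ with one of $D_{2^{k+1}}$, $Q_{2^{k+1}}$, or $SD_{2^{k+1}}$.

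The ``if'' direction and the exact values of $s_{2^j}(G)$ recorded in the statement are then a finite explicit computation from the presentations. For $D_{2^{k+1}}$, the involutions are $x^{2^{k-1}}$ together with the $2^k$ reflections $x^i y$, each generating its own order-$2$ subgroup, giving $s_2(G)=1+2^k$; for $j\geq 2$ every element of order $2^j$ lies in the unique cyclic maximal subgroup $\langle x\rangle\cong\mathbb{Z}_{2^k}$, so $s_{2^j}(G)=1$. For $Q_{2^{k+1}}$, every element outside $\langle x\rangle$ has order exactly $4$, contributing $2^{k-1}$ additional cyclic subgroups of order $4$ besides the unique one inside $\langle x\rangle$ and yielding $s_4(G)=1+2^{k-1}$; the lone involution $x^{2^{k-1}}$ forces $s_2(G)=1$, and for $j\geq 3$ uniqueness inside $\langle x\rangle$ again gives $s_{2^j}(G)=1$. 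The semi-dihedral case is entirely analogous, counting involutions among the ``reflection-like'' elements and the order-$4$ elements jointly.

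The main obstacle is not any single calculation but the appeal to the Burnside--Suzuki classification of maximal-class $2$-groups, which is a nontrivial structural theorem. Once that is taken for granted, as is standard in the literature on $p$-groups, Theorem~\ref{pgroupclass} reduces the corollary to the finite arithmetic checks sketched above.
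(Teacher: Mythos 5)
The paper does not prove this corollary at all; it is quoted verbatim from the cited reference \cite{a.mishra2021lambda}, so there is no in-paper argument to compare against. Your derivation is the standard one and is essentially correct: Theorem~\ref{pgroupclass} (together with the observation that $s_{p^j}(G)\geq 1$ for every $1\leq j\leq k$ because $\exp(G)=p^k$, so that $p\mid s_{p^j}(G)$ forces $s_{p^j}(G)\geq p$) shows that $s_{p^i}(G)=1$ for some $i$ can only happen when $G$ is cyclic or when $p=2$ and $G$ has maximal class; the classification of maximal-class $2$-groups as dihedral, generalized quaternion, or semidihedral (usually attributed to Taussky rather than Burnside--Suzuki, a cosmetic point) then reduces everything to the explicit subgroup counts, which you carry out correctly for $D_{2^{k+1}}$ and $Q_{2^{k+1}}$ and which are indeed routine for $SD_{2^{k+1}}$.

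One small caveat worth recording: the ``if'' direction fails for the boundary case $k=1$ of item (i), since $D_4\cong\mathbb{Z}_2\times\mathbb{Z}_2$ has $s_2=3$ and no higher-order subgroups, so no $s_{2^i}$ equals $1$ there. This is a defect of the statement as transcribed (and is harmless for the paper, which only invokes the corollary for groups of exponent $p^2$), not of your argument; but a complete proof should either exclude $k=1$ from the dihedral family or note the exception explicitly.
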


In view of Corollary \ref{pgroupclasscorollary}, we have the following lemma.
\begin{lemma}{\label{lemma 4}}
    Let $G$ be a finite $p$-group with exponent $p^2$ and $G$ contains exactly one cyclic subgroup of order $p^2$. Then the following holds:
    \begin{itemize}
        \item[(i)] If $p=2$, then $G$ is isomorphic to $\mathbb{Z}_4$ or $D_8$.
        \item[(ii)] If $p>2$, then $G$ is isomorphic to $\mathbb{Z}_{p^2}$.
    \end{itemize}
\end{lemma}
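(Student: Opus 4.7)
The plan is to invoke Corollary~\ref{pgroupclasscorollary} directly, specialised to $k=2$ and to the particular index $i=2$. The hypothesis that $G$ has exponent $p^{2}$ translates to $k=2$ in the notation of the corollary, and the hypothesis that $G$ has exactly one cyclic subgroup of order $p^{2}$ says $s_{p^{2}}(G)=1$. Hence $G$ must fall into the list provided by Corollary~\ref{pgroupclasscorollary}, and the whole argument reduces to sifting through that list.

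For odd $p$, the corollary admits only the cyclic conclusion $G\cong \mathbb{Z}_{p^{k}}=\mathbb{Z}_{p^{2}}$, giving part~(ii) immediately. For $p=2$ and $k=2$ one has to inspect each $2$-group family. The cyclic case supplies $G\cong \mathbb{Z}_{4}$. The dihedral case with $k=2$ is $D_{8}$, and the formula $s_{2^{j}}(D_{2^{k+1}})=1$ for $2\le j\le k$ gives $s_{4}(D_{8})=1$, so $D_{8}$ survives. The semi-dihedral family requires $k\ge 3$, so it contributes nothing when $k=2$. The only delicate point is the generalised quaternion family $Q_{2^{k+1}}$: for $k=2$ this is $Q_{8}$, which does appear in Corollary~\ref{pgroupclasscorollary} because $s_{2}(Q_{8})=1$, but the formula $s_{4}(Q_{2^{k+1}})=1+2^{k-1}$ yields $s_{4}(Q_{8})=3\ne 1$. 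So $Q_{8}$ is ruled out by our stronger hypothesis $s_{p^{2}}(G)=1$, and part~(i) collapses to $G\in\{\mathbb{Z}_{4},D_{8}\}$.

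The argument is therefore essentially bookkeeping on top of Corollary~\ref{pgroupclasscorollary}; the only place where one must be vigilant is the step that excludes $Q_{8}$. The corollary certifies $Q_{8}$ under the weaker condition ``$s_{2^{i}}(G)=1$ for \emph{some} $i$'', so one has to point out explicitly that our hypothesis pins the index down to $i=2$, which $Q_{8}$ fails. Once that distinction is made, the two parts of the lemma follow at once.
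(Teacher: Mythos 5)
Your proof is correct and follows exactly the route the paper intends: the paper states Lemma~\ref{lemma 4} with the single remark ``In view of Corollary~\ref{pgroupclasscorollary}, we have the following lemma,'' leaving precisely the case-sifting you carry out (including the exclusion of $Q_8$ via $s_4(Q_8)=3$) as the implicit argument. Your write-up simply makes that bookkeeping explicit, and does so accurately.
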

\begin{lemma}{\rm \cite[Lemma 2.1]{a.parveen2023nilpotent}}{\label{intersection lemma}}
Let $G$ be a finite $p$-group with exponent $p^2$. Then either $G$ has exactly one cyclic subgroup of order $p^2$ or $G$ contains at least two cyclic subgroups $M$ and $N$ of order $p^2$ such that $|M\cap N|=p$.
\end{lemma}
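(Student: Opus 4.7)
The plan is to recast the lemma in terms of the $p$-th power map on cyclic subgroups of order $p^2$ and then construct the required pair using the normaliser structure of $G$. The key observation is that a cyclic group $C$ of order $p^2$ contains a unique subgroup of order $p$, namely $C^p = \{x^p : x \in C\}$. Hence for two distinct cyclic subgroups $M$ and $N$ of order $p^2$ in $G$, the intersection $M \cap N$ is a proper subgroup of $M$, so $|M \cap N| \in \{1,p\}$, and $|M \cap N| = p$ if and only if $M^p = N^p$. Thus it suffices to show: whenever $s_{p^2}(G) \geq 2$, some two distinct cyclic subgroups of order $p^2$ share the same $p$-th power subgroup; if $s_{p^2}(G) \leq 1$, the first alternative of the dichotomy holds trivially.

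Assume $s_{p^2}(G) \geq 2$; by Lemma~\ref{lemma 4}, $G$ is not $\mathbb Z_{p^2}$, and for $p=2$ it is neither $\mathbb Z_4$ nor $D_8$. Fix $M = \langle a \rangle$ of order $p^2$ and pick $g \in N_G(M) \setminus M$, which exists because $G$ is a $p$-group with $M$ a proper subgroup; then $gag^{-1} = a^i$ for some $i$ coprime to $p$, and since $g$ has $p$-power order in $\mathrm{Aut}(M) \cong (\mathbb Z/p^2)^*$, we get $i \equiv 1 \pmod p$. In the \emph{centralising} case ($i \equiv 1 \pmod{p^2}$), after replacing $g$ by a suitable power to obtain an element of order $p$ outside $M$ (or, if $g^p$ already lies in $M \setminus \{e\}$, using $\langle g\rangle$ directly since $\langle g\rangle \cap M \supseteq \langle g^p\rangle$), the element $b = ga$ lies outside $M$ and satisfies $b^p = g^p a^p = a^p$, so $\langle b \rangle$ is a second cyclic subgroup of order $p^2$ with $\langle b\rangle^p = M^p$. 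In the \emph{non-centralising} case with $p$ odd, the identity $ga^m = a^{im}g$ yields $(ga)^p = a^{i+i^2+\cdots+i^p} g^p$, and since $i = 1 + jp$ with $j$ coprime to $p$, one checks $1+i+\cdots+i^{p-1} \equiv p \pmod{p^2}$, whence $(ga)^p = a^p g^p$ and the conclusion follows exactly as in the centralising case.

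The main obstacle is the $p=2$ non-centralising subcase, where $g$ inverts $a$ with $|g|=2$ and so $(ga)^2 = e$; then $\langle a,g\rangle \cong D_8$ and the construction above fails. To close this case I would argue that since $s_4(G)\geq 2$ but $s_4(D_8) = 1$, the group $G$ properly contains $\langle a,g\rangle$, and then analyse $h \in G \setminus \langle a,g\rangle$: if $h$ centralises $a$ we reduce to the centralising case; otherwise either $h$ normalises $M$ and $gh$ then centralises $a$ (since two involutions acting by inversion multiply to a centralising element), or $h \notin N_G(M)$ and the conjugate $hMh^{-1}$ is a second cyclic subgroup of order $4$ whose intersection with $M$ contains the involution $a^2$, using that $g'a^2(g')^{-1} = a^2$ for every $g' \in N_G(M)$ and a short conjugacy-class analysis of $a^2$ in $G$. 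This final $p=2$ step is the delicate one and is where the structural content of Corollary~\ref{pgroupclasscorollary} is implicitly used.
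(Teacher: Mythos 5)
First, note that the paper does not prove this lemma at all: it is imported verbatim as \cite[Lemma 2.1]{a.parveen2023nilpotent}, so there is no in-paper argument to compare against and your attempt has to stand on its own. Your reduction (two distinct cyclic subgroups of order $p^2$ meet in order $p$ iff they share their unique subgroup of order $p$), your centralising case, and your odd-$p$ non-centralising computation $(ga)^p=a^pg^p$ are all correct, modulo being terse about which power of $g$ to use (and noting that when $g$ has order $p^2$ with $g^p\notin M$, the element $g^p$ automatically \emph{centralises} $a$ because $i^p\equiv 1\pmod{p^2}$, so you fall back into the centralising case rather than staying in the non-centralising one).

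The genuine gap is exactly where you flag it: the $p=2$ endgame. After your case analysis, the only surviving configuration is $C_G(M)=M$ and $N_G(M)=\langle a,g\rangle\cong D_8$, and you need some $h\notin N_G(M)$ with $ha^2h^{-1}=a^2$, so that $hMh^{-1}$ meets $M$ in $\langle a^2\rangle$. You assert this follows from ``a short conjugacy-class analysis of $a^2$,'' but you never rule out the one situation where your plan fails, namely $C_G(a^2)=N_G(M)\cong D_8$: in that event \emph{every} $h\notin N_G(M)$ moves $a^2$, and your third branch produces nothing. That is the entire content of the remaining case, so as written the proof is incomplete. The configuration is in fact impossible, but seeing this requires a different observation: since $s_4(G)\geq 2>s_4(D_8)$, the subgroup $N_G(M)$ is proper in the $2$-group $G$, so $N_G(N_G(M))\supsetneq N_G(M)$; any $h$ in this larger normaliser normalises $D_8=N_G(M)$ and hence normalises its unique (therefore characteristic) cyclic subgroup of order $4$, which is $M$ itself, forcing $h\in N_G(M)$ --- a contradiction. (Equivalently, $h$ fixes $Z(D_8)=\langle a^2\rangle$ pointwise, producing the element you wanted.) Inserting this normaliser-growth-plus-characteristic-subgroup step closes the gap; without it, the key $p=2$ case of the lemma is not established.
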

\begin{theorem}{\rm \cite[Section 4, I]{b.frobenius1895verallgemeinerung}}{\label{number of cyclic subgroup p}}
    Let $p$ be a prime dividing the order of a group $G$. Then $s_p(G)\equiv 1(\mathrm{mod} \ p)$, where $s_p(G)$ is the number of subgroups of order $p$.
\end{theorem}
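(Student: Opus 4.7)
The plan is to prove this via McKay's counting argument, the standard group-action proof of Cauchy's theorem adapted to count subgroups of order $p$. First I would form the set
\[
X=\{(x_1,x_2,\dots,x_p)\in G^p : x_1x_2\cdots x_p=e\},
\]
and observe that choosing $x_1,\dots,x_{p-1}$ freely in $G$ and setting $x_p=(x_1\cdots x_{p-1})^{-1}$ gives $|X|=|G|^{p-1}$, which is divisible by $p$ since $p \mid |G|$ and $p-1\ge 1$.

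Next I would let $\mathbb{Z}_p$ act on $X$ by cyclic rotation $(x_1,\dots,x_p)\mapsto(x_2,\dots,x_p,x_1)$. This action is well defined on $X$, because $x_1\cdots x_p=e$ implies $x_2\cdots x_p x_1=x_1^{-1}(x_1\cdots x_p)x_1=e$. Since $|\mathbb{Z}_p|=p$ is prime, every orbit has size $1$ or $p$, and the fixed points are exactly the diagonal tuples $(x,\dots,x)$ with $x^p=e$. The standard orbit congruence then forces
\[
|\{x\in G : x^p=e\}|\;\equiv\;|X|\;\equiv\;0 \pmod p.
\]

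To finish, I would partition the solutions of $x^p=e$ by subgroup. Every non-identity solution has order $p$, and any two distinct subgroups of order $p$ meet only in $\{e\}$ by Lagrange's theorem applied to their intersection. Hence each order-$p$ subgroup contributes exactly $p-1$ such elements, giving $1+s_p(G)(p-1)\equiv 0 \pmod p$, and reducing $p-1\equiv -1\pmod p$ yields $s_p(G)\equiv 1 \pmod p$.

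I do not anticipate any genuine obstacle: the only two lines needing verification are that cyclic rotation stabilizes $X$ (a one-line conjugation) and that distinct subgroups of prime order $p$ intersect trivially (Lagrange on the common subgroup). The argument is entirely self-contained and in particular avoids any appeal to Sylow theory, which is appropriate since the result is a strengthening of Cauchy's theorem.
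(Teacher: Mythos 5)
Your proof is correct, but it is worth noting that the paper does not prove this statement at all: it is quoted as a classical theorem of Frobenius (1895) and imported by citation, so there is no internal argument to compare against. What you have done is supply a genuinely self-contained proof of exactly the special case the paper needs. Your route -- McKay's orbit-counting argument on the set $X=\{(x_1,\dots,x_p): x_1\cdots x_p=e\}$, giving $|\{x: x^p=e\}|\equiv 0 \pmod p$, followed by the partition of the order-$p$ elements into the $s_p(G)$ subgroups of order $p$ (which pairwise intersect trivially by Lagrange) -- is sound at every step: the rotation action is well defined, orbits have size $1$ or $p$, fixed points are the constant tuples, and the count $1+s_p(G)(p-1)\equiv 0\pmod p$ rearranges to $s_p(G)\equiv 1\pmod p$. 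This is considerably more elementary than Frobenius's original treatment (his general theorem concerns all solutions of $x^n=e$ for any $n$ dividing $|G|$ and requires a more elaborate induction), and it avoids Sylow theory entirely; the trade-off is that your argument only yields the prime-order case, which is all the paper uses, whereas the cited source establishes the stronger divisibility statement.
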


\begin{lemma}{\label{uniquesubgroup3}}
    Let $G$ be a finite group such that $\pi_{G}\subseteq \{1,2,3, 4\}$ and let $G$ has a unique cyclic subgroup of order $3$. Then either $G\cong \mathbb{Z}_3$ or $G\cong S_3$.
\end{lemma}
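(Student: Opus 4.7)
The plan is to exploit the tight restriction $\pi_G\subseteq\{1,2,3,4\}$ together with the uniqueness of the order-$3$ subgroup to pin down both the Sylow structure of $G$ and the action of the Sylow $2$-subgroup on the Sylow $3$-subgroup, forcing $|G|\in\{3,6\}$.

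First I would observe that since every element of $G$ has order in $\{1,2,3,4\}$, only the primes $2$ and $3$ divide $|G|$, so $|G|=2^{a}\cdot 3^{b}$. Let $P$ be a Sylow $3$-subgroup. As $\pi_G\subseteq\{1,2,3,4\}$, every nontrivial element of $P$ has order $3$, so $\exp(P)=3$. By Corollary~\ref{pgroupclasscorollary} (applied to the odd prime $p=3$), a $3$-group with $s_3=1$ must be cyclic, hence $P\cong \mathbb{Z}_3$; in particular $b=1$ and the unique cyclic subgroup $N$ of order $3$ is exactly this Sylow $3$-subgroup. Consequently $N\trianglelefteq G$ and we may write $G=N\rtimes Q$ for some Sylow $2$-subgroup $Q$ of order $2^{a}$.

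Next I would analyze the conjugation action $\varphi\colon Q\to \mathrm{Aut}(N)\cong \mathbb{Z}_2$. Let $K=\ker\varphi$; every $k\in K$ commutes with every $x\in N$. Pick any $x\in N$ of order $3$ and any $k\in K\setminus\{e\}$. Since $|k|\in\{2,4\}$ and $\gcd(|k|,3)=1$, the commuting product satisfies $|xk|=3|k|\in\{6,12\}$, contradicting $\pi_G\subseteq\{1,2,3,4\}$. Hence $K=\{e\}$, so $\varphi$ is injective and $|Q|\mid |\mathrm{Aut}(N)|=2$, giving $|Q|\in\{1,2\}$.

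Finally I would conclude: if $|Q|=1$ then $G=N\cong \mathbb{Z}_3$; if $|Q|=2$ then $|G|=6$ and $\varphi$ is nontrivial, so $G$ is the non-abelian group of order $6$, i.e.\ $G\cong S_3$. The only mildly delicate step is Step~1, where one needs the corollary to rule out a Sylow $3$-subgroup of order $9$ of exponent $3$ (which would be elementary abelian and contain $4$ subgroups of order $3$); the rest is a direct semidirect-product computation, and I do not anticipate a real obstacle.
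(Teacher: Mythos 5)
Your argument is correct. It reaches the same destination as the paper's proof but is packaged differently: the paper's proof is a two-step application of the $N/C$ theorem --- it observes that $\pi_G\subseteq\{1,2,3,4\}$ together with uniqueness forces $C_G(x)=\langle x\rangle$ for a generator $x$ of the unique order-$3$ subgroup, that this subgroup is normal, and hence that $G/\langle x\rangle$ embeds in $\mathrm{Aut}(\langle x\rangle)\cong\mathbb{Z}_2$, giving $|G|\in\{3,6\}$ at once. You instead first pin down the Sylow $3$-subgroup as $\mathbb{Z}_3$ via Corollary~\ref{pgroupclasscorollary}, split $G=N\rtimes Q$, and show the conjugation action of $Q$ on $N$ is faithful; your faithfulness argument (a nontrivial $2$-element commuting with $x$ would produce an element of order $6$ or $12$) is exactly the computation hiding inside the paper's claim $C_G(x)=\langle x\rangle$, restricted to the $2$-part. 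The paper's route is shorter because the $N/C$ bound $|G|\le 6$ automatically rules out a Sylow $3$-subgroup of order $9$, so it never needs the $p$-group classification or the semidirect-product decomposition; your route is more explicit about the group structure and makes the final identification of $S_3$ (nonabelian extension of $\mathbb{Z}_3$ by $\mathbb{Z}_2$) completely mechanical. Both are sound.
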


\begin{proof}
    Let $x\in G$ such that $\langle x\rangle$ be the unique cyclic subgroup of order $3$. Since $\pi_{G}\subseteq \{1,2,3,4\}$, we obtain $C_G(x)= \langle x \rangle $. Also, note that for $g\in G$, we have $g^{-1}\langle x\rangle g = \langle x\rangle$. It follows that $\langle x\rangle $ is a normal subgroup of $G$. Consequently, $\frac{G}{\langle x \rangle}$ is isomorphic to a subgroup of $\mathrm{Aut}(\langle x \rangle)$. It implies that $o(G)\in \{3,6\}$. Thus, either $G\cong \mathbb{Z}_3$ or $G\cong S_3$.
\end{proof}

Now we recall the necessary graph theoretic definitions and notions from  \cite{b.godsil2001algebraic,b.westgraph}. A \emph{graph} $\Gamma$ is a structure $(V(\Gamma), E(\Gamma))$, where $V(\Gamma)$ is the vertex set of $\Gamma$ and $E(\Gamma)\subseteq V(\Gamma)\times V(\Gamma)$ is the edge set of $\Gamma$. If $\{u_1,u_2\}\in E(\Gamma)$, then we say that $u_1$ is \emph{adjacent} to $u_2$ and we denote it by $u_1 \sim u_2$. Otherwise, we write it as $u_1 \nsim u_2$. An edge $\{u,v\}$ in a graph $\Gamma$ is said to be a \emph{loop} if $u=v$. A graph $\Gamma$ is called a \emph{simple} graph if it has no loops or multiple edges. Throughout the paper, we consider only simple graphs. A \emph{subgraph} $\Gamma'$ of a graph $\Gamma$ is defined as a graph where $V(\Gamma')$ and $E(\Gamma')$ are subsets of  $V(\Gamma)$ and $E(\Gamma)$, respectively. A subgraph $\Gamma'$ of a graph $\Gamma$ is an \emph{induced subgraph by a set }$X\subseteq V(\Gamma)$ if $V(\Gamma')=X$ and two vertices $u$ and $v$ of $V(\Gamma')$ are adjacent in $\Gamma'$ if and only if they are adjacent in $\Gamma$. A graph $\Gamma$ is called \emph{complete} if any two vertices of $\Gamma$ are adjacent. The complete graph on $n$ vertices is denoted by $K_n$. A graph $\Gamma$ is called \emph{bipartite} if  $V(\Gamma)$ can be partitioned into two subsets such that no two vertices in the same subset are adjacent.  A \emph{complete bipartite} graph is a bipartite graph having its parts sizes $n_1$ and $n_2$ such that every vertex in one part is adjacent to each vertex of the second part and it is denoted by $K_{n_1,n_2}$. In a graph $\Gamma$, the \emph{subdivision} of an edge $\{u,v\}$ is the operation of replacing $\{u,v\}$ with a path $u\sim w \sim v$ through a new vertex $w$. A  \emph{subdivision} of $\Gamma$ is a graph obtained from $\Gamma$ by successive edge subdivisions. Let $\Gamma_1,\ldots , \Gamma_m$ be $m$ graphs such that $V(\Gamma_i)\cap V(\Gamma_j)= \varnothing$, for distinct $i, j$. Then $\Gamma =\Gamma_1 \cup \cdots \cup \Gamma_m$ is a graph with vertex set  $V(\Gamma_1) \cup \cdots \cup V(\Gamma_m)$ and edge set $E(\Gamma_1) \cup \cdots \cup E(\Gamma_m)$. We denote by $mK_n$ the union of $m$ copies of $K_n$. Let $\Gamma_1$ and $\Gamma _2$ be two graphs with disjoint vertex set, the \emph{join} $\Gamma_1 \vee \Gamma_2$ of $\Gamma_1$ and $\Gamma_2$ is the graph obtained from the union of $\Gamma_1$ and $\Gamma_2$ by adding new edges from each vertex of $\Gamma_1$ to every vertex of $\Gamma_2$.  Two graphs $\Gamma$ and $\Gamma'$ are called \emph{isomorphic graphs} if there is a bijection $f$ from $V(\Gamma)$ to $V(\Gamma')$ such that $u\sim v$ in $\Gamma$ if and only if $f(u)\sim f(v)$ in $\Gamma'$.
   A graph $\Gamma$ is \emph{planar} if it can be drawn on a plane surface such that no two edges cross each other. If $\gamma(\Gamma)$ (or $\overline{\gamma}(\Gamma)$) $=0$, then $\Gamma$ is planar. A graph with genus $1$ is called \emph{toroidal graph} and a graph with cross-cap $1$ is called \emph{projective planar graph}.
The following results are useful for determining the genus and cross-cap of a graph.
 \begin{theorem}{\cite{b.westgraph}}{\label{planarcondition1}}
A graph $\Gamma$ is planar if and only if it does not contain a subdivision of $K_5$ or $K_{3,3}$.
 \end{theorem}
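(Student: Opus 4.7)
The plan is to establish both directions separately, with the real work concentrated in sufficiency.

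For necessity ($\Rightarrow$), I would first observe that planarity is inherited by subgraphs and preserved under subdivisions: a plane drawing of a graph induces a plane drawing of any subdivision of any subgraph. Hence it suffices to show that $K_5$ and $K_{3,3}$ are themselves non-planar. This follows from Euler's formula $n - m + f = 2$ for a connected plane graph together with face-length bounds. In any simple plane graph every face is bounded by at least $3$ edges, giving $m \leq 3n - 6$, which rules out $K_5$ since $m = 10 > 9$. In a triangle-free plane graph every face is bounded by at least $4$ edges, giving $m \leq 2n - 4$, which rules out $K_{3,3}$ since $m = 9 > 8$.

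For sufficiency ($\Leftarrow$), I would argue by a minimal counterexample in the style of Thomassen. Suppose $\Gamma$ is non-planar, contains no subdivision of $K_5$ or $K_{3,3}$, and has the fewest edges among graphs with these two properties. The first step is a reduction to the $3$-connected case: if $\Gamma$ admits a vertex cut of size at most $2$, split $\Gamma$ along this cut into two strictly smaller graphs, each of which still contains no Kuratowski subdivision and hence, by minimality, is planar; then explain how to glue their planar embeddings back together to obtain a planar embedding of $\Gamma$, contradicting our assumption. This reduction is routine but requires a careful treatment of the $1$-cut and $2$-cut cases separately.

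Once $\Gamma$ is $3$-connected, the main tool is Tutte's theorem guaranteeing that in any $3$-connected graph on at least $5$ vertices there exists an edge $e = uv$ whose contraction $\Gamma / e$ is still $3$-connected. By minimality, $\Gamma / e$ is planar, and by Whitney's uniqueness theorem it has an essentially unique combinatorial planar embedding in which the face-boundaries are exactly the non-separating induced cycles. The technical heart of the argument, and the step I expect to be the main obstacle, is the following case analysis: inspecting the cyclic order of the edges around the vertex of $\Gamma/e$ obtained by contracting $e$, one must show that either the embedding of $\Gamma/e$ extends to a planar embedding of $\Gamma$ (contradicting non-planarity) or else the obstruction to such an extension exhibits an explicit subdivision of $K_5$ or $K_{3,3}$ inside $\Gamma$ (contradicting the hypothesis on $\Gamma$). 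Organising the sub-cases according to how the neighbours of $u$ and $v$ are distributed around the face of $\Gamma / e$ corresponding to the contracted vertex, and extracting the two Kuratowski minors in the obstructed cases, is where the bulk of the argument lies.
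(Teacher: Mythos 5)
This statement is Kuratowski's theorem, which the paper imports from West's textbook without proof, so there is no in-paper argument to compare against; what you have written must stand on its own as a proof of a classical result. Your necessity direction is complete and correct: planarity passes to subgraphs and to graphs whose subdivisions are planar, and the Euler-formula counts $m \leq 3n-6$ (giving $10 > 9$ for $K_5$) and $m \leq 2n-4$ for triangle-free graphs (giving $9 > 8$ for $K_{3,3}$) are exactly the standard argument.

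The sufficiency direction, however, is a roadmap rather than a proof, and the two places you flag as ``routine'' or ``the main obstacle'' are precisely where the theorem lives. First, the reduction to the $3$-connected case is not just a matter of gluing embeddings: in the $2$-cut case $\{x,y\}$ you must add a virtual edge $xy$ to each piece (so that each piece can be embedded with $x$ and $y$ on a common face), and you must then verify that a Kuratowski subdivision in a piece-plus-virtual-edge yields one in $\Gamma$ by routing the virtual edge as a path through the other piece; without this the minimality hypothesis does not apply to the pieces. Second, the central case analysis --- showing that if the unique embedding of $\Gamma/e$ does not extend to $\Gamma$ then the neighbours of $u$ and $v$ interleave on the face boundary in one of a small number of patterns, each of which explicitly produces a $K_5$- or $K_{3,3}$-subdivision --- is the entire content of the hard direction, and you have described its shape without carrying out any of the sub-cases. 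As submitted, the argument establishes only the easy implication; the converse is asserted with a correct strategy but no execution. If you intend to rely on this result as the paper does, cite it; if you intend to prove it, the interleaving analysis must be written out in full.
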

\begin{theorem}{\rm \cite[p. 58, p. 152]{b.White1984}} {\label{genuscondition}} 
The genus and cross-cap of the complete graphs $K_n$ and $K_{m, n}$ are given below:
\begin{itemize}
\item[(i)] $\gamma(K_n)= \left\lceil{\frac{(n-3)(n-4)}{12}}\right\rceil $, $n\geq 3$.
\item[(ii)]$\gamma(K_{m,n})=\left\lceil \frac{(m-2)(n-2)}{4}\right\rceil $, $m,n\geq 2$.
\item[(iii)] $\overline{\gamma}(K_n)= \left\lceil{\frac{(n-3)(n-4)}{6}}\right\rceil $, $n\geq 3$, $n\neq 7$; $\overline{\gamma} (K_7)=3$.
\item[(iv)] $\overline{\gamma}(K_{m,n})=\left\lceil \frac{(m-2)(n-2)}{2}\right\rceil $, $m,n\geq 2$.
\end{itemize}
\end{theorem}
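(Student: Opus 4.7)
The plan is to prove each of the four formulas by matching a lower bound coming from Euler's formula against an upper bound coming from an explicit embedding. The lower bounds are elementary; the upper bounds are the deep content of the theorem and require the Ringel--Youngs machinery of current graphs and rotation systems.

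First I would set up Euler's formula: if a connected simple graph embeds in the orientable surface of genus $g$, then $V - E + F = 2 - 2g$, and in the non-orientable surface $\mathbb{N}_k$ of crosscap $k$ one has $V - E + F = 2 - k$. Since each face is bounded by at least three edges and each edge borders exactly two faces, $2E \geq 3F$ for any simple graph, while the bipartite graph $K_{m,n}$ has girth $4$ and hence $2E \geq 4F$. Plugging in $V = n$, $E = \binom{n}{2}$ for $K_n$ and $V = m+n$, $E = mn$ for $K_{m,n}$ and eliminating $F$ gives, after taking ceilings, the lower bounds $\gamma(K_n) \geq \lceil (n-3)(n-4)/12 \rceil$, $\gamma(K_{m,n}) \geq \lceil (m-2)(n-2)/4 \rceil$, $\overline{\gamma}(K_n) \geq \lceil (n-3)(n-4)/6 \rceil$, and $\overline{\gamma}(K_{m,n}) \geq \lceil (m-2)(n-2)/2 \rceil$.

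Next I would match each lower bound with an explicit embedding. For $\gamma(K_n)$ I would invoke the Ringel--Youngs construction, which splits into twelve cases indexed by $n \pmod{12}$ and produces triangular embeddings on the appropriate orientable surface by designing rotation systems via current graphs on small Cayley graphs; the non-orientable counterpart (for $\overline{\gamma}(K_n)$ with $n \neq 7$) is handled by Ringel's analogous case analysis. For the complete bipartite graphs, Ringel's quadrilateral embeddings on both orientable and non-orientable surfaces of the right Euler genus furnish the matching upper bounds. In each case verifying that the constructed rotation scheme yields the claimed Euler characteristic is routine once the scheme is written down, so the genuine difficulty is organizing the twelve residue-class constructions.

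Finally, the anomalous value $\overline{\gamma}(K_7) = 3$ needs a separate argument, since the Euler estimate gives only $2$. I would argue that an embedding of $K_7$ in the Klein bottle $\mathbb{N}_2$ would force equality $2E = 3F$, hence a triangulation with $V = 7$, $E = 21$, $F = 14$; then a case analysis on rotation systems (or Franklin's original argument) rules this out, while an explicit embedding in $\mathbb{N}_3$ supplies the upper bound $\overline{\gamma}(K_7) \leq 3$. The main obstacle throughout is the upper-bound side: the Ringel--Youngs embeddings are intricate and case-heavy, and the exceptional non-embeddability of $K_7$ in the Klein bottle is precisely the place where the clean Euler-based method breaks down and has to be supplemented by a combinatorial impossibility proof.
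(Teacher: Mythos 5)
This statement is not proved in the paper at all: it is quoted as a known result from White's book (it is the Ringel--Youngs theorem together with its non-orientable companions and Franklin's exceptional value $\overline{\gamma}(K_7)=3$), and your outline correctly reproduces exactly the argument that the cited source gives --- Euler-formula lower bounds, matching upper bounds from the Ringel--Youngs current-graph constructions split into residue classes, and a separate combinatorial argument that $K_7$ does not triangulate the Klein bottle. So your proposal is sound and coincides with the standard proof the paper is implicitly relying on; the only honest caveat is that the upper-bound constructions you invoke are themselves book-length, which is precisely why the paper cites rather than proves this theorem.
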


A \emph{block} of a graph $\Gamma$ is a maximal connected subgraph $B$ of $\Gamma$ with respect to the property that $B$ remains connected even if we remove a single vertex from  $B$. The following result provides a method to compute the genus and the cross-cap of a graph by using its blocks.

\begin{theorem} [{\rm \cite[Theorem 1]{a.Battle1962}, \cite[Corollary 3]{a.Stahl1977}}]{\label{block}}
     Let $\Gamma$ be a connected graph with $n$ blocks $B_1, \ldots, B_n$. Then 
     \begin{itemize}
         \item[(i)] $\gamma(\Gamma)=\sum_{i=1}^n \gamma\left(B_i\right)$.
         \item[(ii)]  If $\bar{\gamma}\left(B_i\right)=2 \gamma\left(B_i\right)+1$ for each $i$, then
$$
\bar{\gamma}(\Gamma)=1-n+\sum_{i=1}^n \bar{\gamma}\left(B_i\right) .
$$
Otherwise,
$$
\bar{\gamma}(\Gamma)=2 n-\sum_{i=1}^n \mu\left(B_i\right),
$$
where $\mu\left(B_i\right)=\max \left\{2-2 \gamma\left(B_i\right), 2-\bar{\gamma}\left(B_i\right)\right\}$.
     \end{itemize}
\end{theorem}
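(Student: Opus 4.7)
The plan is to prove (i) first by induction on the number $n$ of blocks and then adapt the argument for the nonorientable formulas in (ii), following the strategy of Battle--Harary--Kodama--Youngs and Stahl. When $n=1$ both formulas are tautologies, so the core case is $n=2$: a single cut vertex $v$ decomposes $\Gamma$ as $\Gamma_1 \cup \Gamma_2$ with $V(\Gamma_1)\cap V(\Gamma_2)=\{v\}$, and it suffices to prove $\gamma(\Gamma)=\gamma(\Gamma_1)+\gamma(\Gamma_2)$ together with the crosscap identity; the general case follows by peeling off a leaf of the block-cut tree and applying the induction hypothesis.

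For the upper bound $\gamma(\Gamma)\le \gamma(\Gamma_1)+\gamma(\Gamma_2)$, I take minimum genus embeddings of $\Gamma_1,\Gamma_2$ in orientable surfaces $S_1,S_2$, delete a small open disk in each whose boundary passes arbitrarily close to $v$, and identify the two boundary circles so that the two copies of $v$ are glued. The result is a connected sum $S_1\#S_2$ of genus $\gamma(S_1)+\gamma(S_2)$ containing $\Gamma$ as an embedded subgraph. The matching lower bound is the difficult direction: starting from a minimum genus embedding of $\Gamma$ on a surface $S$, I need to exhibit a simple closed curve $C\subset S$ meeting $\Gamma$ only at $v$ and separating $\Gamma_1$ from $\Gamma_2$. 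Once $C$ is produced, cutting along $C$ and capping each boundary with a disk yields closed surfaces $S_1,S_2$ embedding $\Gamma_1,\Gamma_2$, and $\chi(S)=\chi(S_1)+\chi(S_2)-2$ forces $\gamma(S_1)+\gamma(S_2)\le \gamma(S)$.

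Producing $C$ is where the real work lies and is the main obstacle. The rotation at $v$ induces a cyclic ordering of edges incident to $v$, and a priori the edges of $\Gamma_1$ and $\Gamma_2$ may be interleaved, so no face-tracing curve through $v$ separates the two subgraphs. I would follow Battle--Harary--Kodama--Youngs: argue that local cut-and-paste moves at $v$ which rearrange the cyclic order do not increase $\chi(S)$, so after finitely many such moves all $\Gamma_1$-edges at $v$ become consecutive in the rotation; a face-tracing curve around one side then furnishes $C$. Verifying that each local move is non-increasing in genus, by tracking how face boundaries recombine, is the technical heart of the proof.

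For (ii) the same two-block reduction applies, but the connected-sum step must respect orientability. Using $N_{2g+k}=T_g\#N_k$ (with $T_g$ the orientable surface of genus $g$ and $N_k$ the nonorientable surface with $k$ crosscaps), summing two nonorientable pieces gives $N_{k_1+k_2}$, while summing an orientable piece into a nonorientable one inflates the crosscap count by twice the genus. For each block $B_i$ the best available embedding has Euler characteristic $\mu(B_i)=\max\{2-2\gamma(B_i),\,2-\overline{\gamma}(B_i)\}$; provided at least one block is assigned a nonorientable embedding, the $n-1$ connected sums across cut vertices yield a nonorientable surface of Euler characteristic $\sum\mu(B_i)-2(n-1)$, hence crosscap $2n-\sum\mu(B_i)$. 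The condition $\overline{\gamma}(B_i)=2\gamma(B_i)+1$ for all $i$ is precisely the regime in which $\mu(B_i)=2-2\gamma(B_i)$ is realised only orientably, forcing an extra crosscap and yielding the alternate formula $\overline{\gamma}(\Gamma)=1-n+\sum\overline{\gamma}(B_i)$. The matching lower bound again comes from the surgery of part (i); the extra subtlety---that the orientability type of each piece extracted by cutting along $C$ agrees with the optimum recorded by $\mu$---requires tracking whether $C$ is one- or two-sided on $S$, and is the second technical obstacle.
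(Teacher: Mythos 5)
A preliminary remark: the paper never proves Theorem \ref{block} at all; it is quoted as a known result from \cite{a.Battle1962} and \cite{a.Stahl1977}, so the only fair comparison is with those original arguments, whose broad strategy (induction on the block-cut tree, reduction to two subgraphs amalgamated at a cut vertex, connected-sum upper bound, surgery lower bound) your sketch does reproduce. However, your proposal is a plan rather than a proof, and both places where you yourself locate ``the technical heart'' contain genuine gaps. First, the assertion that local cut-and-paste moves rearranging the rotation at $v$ ``do not increase $\chi(S)$'' is false as stated: an arbitrary transposition of two edges in a rotation can strictly increase the genus of the resulting embedding. What Battle--Harary--Kodama--Youngs actually establish is that \emph{some} minimum-genus embedding has the edges of $\Gamma_1$ occurring consecutively at $v$, and producing that embedding is the whole difficulty; you cannot obtain it by declaring all moves harmless. (A cleaner route avoids the separating curve altogether: restrict the rotation system of a minimal embedding of $\Gamma$ to $\Gamma_1$ and to $\Gamma_2$, prove the face inequality $f \leq f_1 + f_2 - 1$ for the induced embeddings, and read off $\gamma(\Gamma_1)+\gamma(\Gamma_2)\leq\gamma(\Gamma)$ from Euler's formula.)

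Second, part (ii) is question-begging at the decisive point. The upper bounds do follow from your connected-sum bookkeeping: the $n-1$ amalgamations give a surface of Euler characteristic $\sum_{i=1}^n\mu(B_i)-2(n-1)$, which is nonorientable (hence of crosscap $2n-\sum_{i=1}^n\mu(B_i)$) provided some block attains $\mu(B_i)$ by a nonorientable embedding, and otherwise one adds a single crosscap, giving $1-n+\sum_{i=1}^n\bar{\gamma}(B_i)$ exactly when $\bar{\gamma}(B_i)=2\gamma(B_i)+1$ for every $i$. But the content of Stahl--Beineke is the matching lower bounds: nonorientable genus is \emph{not} additive over blocks in general, and one must show that a minimal nonorientable embedding of $\Gamma$ can be cut so that each block inherits an embedding of Euler characteristic at least $\mu(B_i)$, together with the separate parity argument that in the exceptional case the value $2\sum_{i=1}^n\gamma(B_i)$ cannot be realized nonorientably. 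Writing that the exceptional hypothesis ``forc[es] an extra crosscap'' is a restatement of this claim, not a proof of it; as it stands, the hard direction of both (i) and (ii) is missing from your proposal.
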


\begin{theorem}{\cite{b.White1984}}{\label{m_n_g_formula}}
    Let $\Gamma$ be a simple connected graph with $n$ vertices and $m$ edges, where $n\geq 3$. Then $\gamma(\Gamma)\geq \frac{m}{6} - \frac{n}{2}+1$. Furthermore, equality holds if and only if $\Gamma$ has a triangular embedding.
\end{theorem}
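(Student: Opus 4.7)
The plan is to combine Euler's formula for surfaces with the standard face-size inequality for simple graphs. First I would fix any $2$-cell embedding of $\Gamma$ in the orientable surface $S_g$ of genus $g=\gamma(\Gamma)$; such a minimum-genus embedding is automatically $2$-cell by a theorem of Youngs. Writing $f$ for the number of faces, Euler's formula gives
\[
n - m + f \;=\; 2 - 2g.
\]

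Next I would use simplicity together with $n\geq 3$ to conclude that every face is bounded by a closed walk of length at least $3$. Double counting edge--face incidences (each edge appears in the boundary of at most two faces) then yields $2m \geq 3f$, i.e.\ $f \leq 2m/3$. Substituting into Euler's formula gives $2-2g \leq n - m + \tfrac{2m}{3} = n - \tfrac{m}{3}$, which rearranges to $g \geq \tfrac{m}{6} - \tfrac{n}{2} + 1$. Since this holds for the minimum-genus embedding, $\gamma(\Gamma)\geq \tfrac{m}{6}-\tfrac{n}{2}+1$ follows.

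For the equality characterization, note that the only inequality in the argument was $2m \geq 3f$, with equality precisely when every face boundary is a $3$-cycle, i.e.\ when the embedding is triangular. Conversely, from any triangular embedding in $S_g$ one reads off $3f = 2m$, and feeding this back into Euler's formula recovers $g = \tfrac{m}{6}-\tfrac{n}{2}+1$, showing that the lower bound is attained exactly by triangular embeddings.

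The main obstacle is really a bookkeeping point rather than a combinatorial one: making sure Euler's formula may be applied with equality, which requires the embedding to be cellular. This is handled by invoking the (standard) fact that a minimum-genus embedding of a connected graph is $2$-cell; once this is granted, the rest of the argument is the short face-counting computation above.
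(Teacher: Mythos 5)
Your argument is correct and is the standard textbook proof (Euler's formula for a $2$-cell minimum-genus embedding plus the face-counting bound $2m\geq 3f$ for simple graphs with $n\geq 3$); the paper itself gives no proof, citing the result directly from White's book, where exactly this argument appears. The only point worth tightening is in the converse direction: a triangular embedding lives a priori in some $S_h$ with $h\geq\gamma(\Gamma)$, and Euler's formula gives $h=\frac{m}{6}-\frac{n}{2}+1\leq\gamma(\Gamma)$, forcing $h=\gamma(\Gamma)$ and hence equality --- your phrasing implicitly assumes the triangular embedding is already of minimum genus, but the fix is immediate.
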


\begin{lemma}{\rm \cite[Lemma 3.1.4]{b.Mohar2001graphs}}{\label{crosscap_formula}}
Let $\phi : \Gamma \rightarrow \mathbb{N}_k$ be a $2$-cell embedding of a connected graph $\Gamma$ to the non-orientable surface $\mathbb{N}_k$. Then $v-e+f=2-k$, where $v$, $e$ and $f$ are number of vertices, edges and faces of $\phi (\Gamma)$ respectively, and $k$ is a cross-cap of $\mathbb{N}_k$.
    
\end{lemma}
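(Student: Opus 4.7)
The plan is to interpret the $2$-cell embedding $\phi$ as equipping $\mathbb{N}_k$ with a finite CW-decomposition and then invoke topological invariance of the Euler characteristic. I would first observe that $\phi(V(\Gamma))$ serves as the $0$-skeleton, the images of the open edges form the $1$-cells, and the hypothesis that every face is homeomorphic to an open disk is exactly the requirement that the faces be $2$-cells. This yields on $\mathbb{N}_k$ the structure of a finite CW-complex whose combinatorial Euler characteristic is precisely $v-e+f$.

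Next, I would appeal to the standard fact from algebraic topology that for any finite CW-complex $X$, the alternating sum $\sum_i (-1)^i c_i(X)$, where $c_i(X)$ denotes the number of $i$-cells, equals $\sum_i (-1)^i \operatorname{rk} H_i(X;\mathbb{Z})$ and is therefore a topological invariant of $X$. Consequently, the quantity $v-e+f$ depends only on the homeomorphism type of $\mathbb{N}_k$, not on the particular embedded graph $\Gamma$.

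To finish, I would compute $\chi(\mathbb{N}_k)$ using the standard polygonal model of the non-orientable surface: realize $\mathbb{N}_k$ as a $2k$-gon with boundary word $a_1 a_1 a_2 a_2 \cdots a_k a_k$. After the side identifications are carried out, all $2k$ polygon vertices collapse to a single point, the $k$ labels $a_1,\ldots,a_k$ become $k$ distinct $1$-cells, and the interior of the polygon is the unique $2$-cell. Therefore $\chi(\mathbb{N}_k) = 1 - k + 1 = 2 - k$, and combining this with the invariance in the previous paragraph yields $v - e + f = 2 - k$, as required.

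The main obstacle is precisely the topological invariance of the Euler characteristic, which in a rigorous treatment rests on singular homology or on an equally substantial result such as the classification theorem for compact surfaces. A more elementary route would avoid homology altogether by inducting on $e$: removing an edge that borders two distinct faces merges those faces, decreasing both $e$ and $f$ by one and so preserving $v - e + f$; handling an edge that borders a single face forces one to cut the surface along that edge and reduce to a non-orientable surface of smaller crosscap number together with its induced $2$-cell embedding. This bookkeeping, while conceptually straightforward, is noticeably more delicate in the non-orientable setting than in the orientable one, and correctly identifying which local move lowers $k$ by exactly one is the key technical point.
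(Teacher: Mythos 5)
Your argument is correct, but note that the paper does not prove this lemma at all: it is quoted verbatim from Mohar and Thomassen (Lemma 3.1.4 of \emph{Graphs on Surfaces}) and used as a black box, so there is no internal proof to compare against. Your route --- reading the $2$-cell embedding as a finite CW-decomposition of $\mathbb{N}_k$, invoking topological invariance of the Euler characteristic, and computing $\chi(\mathbb{N}_k)=2-k$ from the $2k$-gon with boundary word $a_1a_1\cdots a_ka_k$ (one vertex, $k$ edges, one face) --- is the standard proof and is sound. The one genuine issue you correctly flag yourself is that the invariance of $v-e+f$ is the substantive input; it must come either from homology or from the edge-deletion induction you sketch (which is essentially how the cited reference proceeds), and in the non-orientable case the inductive step that cuts along an edge bordering a single face and lowers the crosscap number is the delicate point. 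As an outline the proposal is complete and accurate; as a self-contained proof it still rests on that invariance result, exactly as you acknowledge.
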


\section{Proof of the main results}
In this section, we prove the main results of the manuscript. The following results are useful to prove our main results.
  \begin{proposition}[{\rm \cite[Proposition 2.1]{a.bhuniya2017normal}}]{\label{adjency in aH bH}}
Let $H$ be a proper normal subgroup of a group $G$ and let $a H(\neq H)$ and $b H(\neq H)$ be two distinct cosets of $H$. If $a \sim b$ in $\gh$, then each element of $aH$ is adjacent to every element of $b H$ in $\gh$.
\end{proposition}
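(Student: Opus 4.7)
The plan is to exploit the fact that $H$ is normal in $G$, so that the quotient $G/H$ is a bona fide group and the adjacency condition defining $\Gamma_H(G)$ only sees the cosets $aH$ and $bH$, not the specific coset representatives. Concretely, since $a \sim b$, by the definition of $\Gamma_H(G)$ we may assume without loss of generality that $aH = b^m H$ for some positive integer $m$ (the other case $bH = a^n H$ is symmetric).

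Now pick any $a' \in aH$ and $b' \in bH$. Because $aH \neq H$ and $bH \neq H$, neither $a'$ nor $b'$ lies in $H$, so both belong to the vertex set $V(\Gamma_H(G)) = (G \setminus H) \cup \{e\}$; and because $aH \neq bH$, the cosets (hence the elements $a', b'$) are distinct. Writing $a' = ah_1$ and $b' = bh_2$ with $h_1, h_2 \in H$, normality of $H$ guarantees that $(bh_2)^m H = (bH \cdot h_2H)^m = (bH)^m = b^m H$, i.e. coset multiplication in $G/H$ is well-defined and $(b')^m H = (b'H)^m$. Therefore
\[
(b')^m H = (b'H)^m = (bH)^m = b^m H = aH = a'H,
\]
which is exactly the adjacency condition $a' H = (b')^m H$, so $a' \sim b'$ in $\Gamma_H(G)$.

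There is no serious obstacle to this argument: the whole content of the proposition is the observation that the adjacency relation of $\Gamma_H(G)$ factors through the canonical projection $G \to G/H$. The only mild points worth flagging are (i) checking that the chosen representatives $a', b'$ actually lie in the vertex set, which follows from $aH, bH \neq H$; and (ii) invoking normality at exactly the right place, namely to reduce $(bh_2)^m H$ to $b^m H$ via the group structure on $G/H$.
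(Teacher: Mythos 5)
Your proof is correct. Note that the paper itself states this proposition without proof, citing it as Proposition 2.1 of Bhuniya and Bera; your argument --- that normality of $H$ makes $(b')^mH=(bH)^m$ independent of the coset representative, so the adjacency condition factors through the projection $G\to G/H$ --- is exactly the standard justification, and you correctly handle the side conditions that $a',b'$ are distinct vertices lying in $(G\setminus H)\cup\{e\}$.
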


 \begin{proposition}[{\rm \cite[Proposition 2.3]{a.bhuniya2017normal}}]{\label{adjancy in P_E(G/H}}
    Let $a$ and $b$ be two distinct vertices of the graph $\gh$. Then $a\sim b$ in $\gh$ if and only if either $a H=b H$ or $aH\sim bH$ in the power graph $\pgh$.
\end{proposition}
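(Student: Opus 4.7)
The statement to prove is Proposition \ref{adjancy in P_E(G/H}, which asserts that adjacency of two distinct vertices $a,b$ in $\gh$ is equivalent to either $aH=bH$ or $aH\sim bH$ in $\pgh$. The proof is essentially a definition-chasing argument, and the plan is to unpack both adjacency conditions and show they coincide once the cosets are considered as elements of $G/H$.

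The plan is to handle the two directions separately. For the forward direction, I would assume $a\sim b$ in $\gh$ with $a\neq b$. By the definition of $\gh$, there exist integers $m,n$ such that $aH=b^mH$ or $bH=a^nH$. Rewriting $b^mH=(bH)^m$ and $a^nH=(aH)^n$ inside the quotient group $G/H$, the hypothesis becomes $aH=(bH)^m$ or $bH=(aH)^n$. If $aH=bH$, the first alternative of the conclusion holds. Otherwise $aH\neq bH$ as cosets, and one coset is a power of the other, which is exactly the adjacency $aH\sim bH$ in $\pgh$.

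For the backward direction, split into the two cases. If $aH=bH$, then taking $m=1$ gives $aH=b^1H$, so $a\sim b$ in $\gh$ by definition. If instead $aH\sim bH$ in $\pgh$, then $aH\neq bH$ and (by the definition of the power graph on $G/H$) there exists an integer $m$ with $aH=(bH)^m=b^mH$ or an integer $n$ with $bH=(aH)^n=a^nH$; in either case the defining adjacency condition of $\gh$ is satisfied, hence $a\sim b$.

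There is essentially no obstacle in this argument; the only care needed is with the identity vertex. When one of $a,b$ equals $e$, say $a=e$, the coset $aH=H$ is the identity of $G/H$, and in a finite group $bH$ has finite order, so $H=(bH)^n$ for $n=o(bH)$, meaning $e$ is adjacent to every non-identity vertex in $\gh$ and correspondingly $H$ is adjacent to every other coset in $\pgh$. Hence the equivalence is preserved in this boundary case, and no separate argument is required.
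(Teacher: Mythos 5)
Your argument is correct: both directions are exactly the definition-unpacking one expects, the identity $b^mH=(bH)^m$ in the quotient group is the only substantive observation needed, and you correctly separate the case $aH=bH$ (where the cosets coincide, so there is no edge in $\pgh$ but $m=1$ gives adjacency in $\gh$) from the case of a genuine edge in $\pgh$. Note that the paper itself gives no proof of this proposition --- it is quoted verbatim from Bhuniya and Bera \cite[Proposition 2.3]{a.bhuniya2017normal} --- so there is nothing to compare against beyond confirming that your verification, including the remark about the vertex $e$, is the standard one and is sound.
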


\begin{theorem}[{\rm \cite[Theorem 3.1]{a.bhuniya2017normal}}]{\label{completenss}}
    Let $G$ be a finite group and $H$ be a non-trivial normal subgroup of $G$. Then the normal subgroup based  power graph $\gh$ is complete if and only if $\fgh$ is a cyclic $p$-group for some prime $p$. 
\end{theorem}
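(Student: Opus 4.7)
The plan is to reduce the completeness of $\gh$ to the completeness of the ordinary power graph $\pgh$ of the quotient, and then invoke the well-known characterization of groups with a complete power graph. By Proposition \ref{adjancy in P_E(G/H}, two distinct vertices $a,b$ of $\gh$ are adjacent if and only if either $aH=bH$ or $aH\sim bH$ in $\pgh$. First I would observe that any two elements of a common coset are automatically adjacent (take $m=1$), and that the identity vertex $e$ is adjacent to every $a\in G\setminus H$, since $G$ is finite and hence $a^{|G|}=e\in H$, giving $eH=a^{|G|}H$. Thus completeness of $\gh$ is controlled exactly by what happens between distinct non-identity cosets.

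Next I would translate the remaining condition into the quotient: for every pair of distinct non-identity cosets $aH,bH$ we need $aH=(bH)^m$ or $bH=(aH)^n$ for some $m,n$, which is precisely the adjacency relation in $\pgh$. Combining this with the previous paragraph, $\gh$ is complete if and only if $\pgh$ is complete.

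Now I would prove the standard fact that for a finite group $K$, the power graph $\mathcal{P}(K)$ is complete if and only if $K$ is a cyclic $p$-group. If $K=\langle g\rangle$ has order $p^k$, then for any $x,y\in K$ with $o(x)\le o(y)$ we have $\langle x\rangle\subseteq\langle y\rangle$ by the subgroup structure of a cyclic $p$-group, so $x$ is a power of $y$. Conversely, if $\mathcal{P}(K)$ is complete, then any element of prime order $p$ and any element of a different prime order $q$ would be non-adjacent (neither is a power of the other), so $|K|$ must be a prime power; and if $K$ were a non-cyclic $p$-group it would contain two distinct subgroups of order $p$, whose generators are again non-adjacent. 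Hence $K$ is a cyclic $p$-group.

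Putting these two steps together gives the theorem. The only non-routine point is the power-graph completeness lemma in the last paragraph; the quotient reduction via Proposition \ref{adjancy in P_E(G/H} is essentially immediate once one carefully checks the role of the identity vertex $e$ and of within-coset adjacency.
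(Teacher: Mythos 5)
The paper itself offers no proof of this statement---it is quoted verbatim from Bhuniya and Bera \cite{a.bhuniya2017normal}---so there is no internal argument to compare against. Your reduction is the natural one and is carried out correctly: the identity vertex is adjacent to everything (via $a^{|G|}=e\in H$), elements of a common coset are adjacent, and Proposition \ref{adjancy in P_E(G/H} converts the remaining adjacencies into adjacencies in $\pgh$, so $\gh$ is complete if and only if $\pgh$ is complete. The forward direction of your power-graph lemma (subgroups of a cyclic $p$-group form a chain) is also fine.

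There is, however, a genuine gap in the converse of that lemma. You claim that a non-cyclic $p$-group must contain two distinct subgroups of order $p$. This is false: the generalized quaternion groups $Q_{2^{k+1}}$ are non-cyclic $2$-groups with a unique subgroup of order $2$ (the paper's own Corollary \ref{pgroupclasscorollary} records $s_2\bigl(Q_{2^{k+1}}\bigr)=1$). These groups genuinely need to be excluded, and your argument does not touch them; e.g.\ $\mathcal{P}(Q_8)$ fails to be complete because $i$ and $j$ generate distinct cyclic subgroups of order $4$ and neither is a power of the other, not because of order-$2$ elements. The repair is easy and in fact removes the need for the $p$-group case analysis: if $\mathcal{P}(K)$ is complete, then for any $x,y\in K$ either $\langle x\rangle\subseteq\langle y\rangle$ or $\langle y\rangle\subseteq\langle x\rangle$, so the cyclic subgroups of $K$ form a chain; the maximal element of this finite chain contains every element, hence $K$ is cyclic, and your two-distinct-primes observation then forces $|K|$ to be a prime power. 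Equivalently, one can take generators of two distinct maximal cyclic subgroups (which exist in any non-cyclic finite group) in place of two subgroups of order $p$.
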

    
\begin{theorem}[{\rm \cite[Theorem 5.4]{a.bhuniya2017normal}}]{\label{planarity}}
    Let $G$ be a finite group and $H$ be a non-trivial normal subgroup of $G$. Then the normal subgroup based  power graph $\gh$ is planar if and only if $|H|=2$ or $3$ and $\fgh \cong \mathbb{Z}_{2} \times \mathbb{Z}_{2} \times \cdots \times \mathbb{Z}_{2}$.
\end{theorem}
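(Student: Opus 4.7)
The plan is to exploit the structural description of $\gh$ supplied by Propositions \ref{adjency in aH bH} and \ref{adjancy in P_E(G/H}. Together they say that $\gh$ is a coset blow-up of the power graph $\pgh$: the vertex $e$ stands for the identity coset $H$, each non-identity coset $aH$ contributes a complete graph $K_{|H|}$ on its $|H|$ elements, and two distinct non-identity cosets are joined in $\gh$ by a complete bipartite graph $K_{|H|,|H|}$ precisely when they are adjacent in $\pgh$. Since the identity of $\fgh$ is a power of every element, it is adjacent to every other vertex of $\pgh$, and hence $e$ is adjacent to every other vertex of $\gh$. The first step is therefore to record the identification $\gh \cong K_{1} \vee \Gamma^{*}$, where $\Gamma^{*}$ is the $K_{|H|}$-blow-up of the induced subgraph of $\pgh$ on non-identity vertices.

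For necessity, assume $\gh$ is planar. I would first rule out $|H|\ge 4$: for any non-identity coset $aH$, the set $\{e\}\cup aH$ induces $K_{|H|+1}$, which contains $K_{5}$ if $|H|\ge 4$, contradicting Theorem \ref{planarcondition1}. Hence $|H|\in\{2,3\}$. Next, I would rule out any $\bar g\in\fgh$ of prime order $p\ge 3$: in the cyclic subgroup $\langle\bar g\rangle$, any two non-identity elements are mutual powers, so the $p-1$ non-identity cosets inside $\langle\bar g\rangle$ form a clique in $\pgh$, and by Proposition \ref{adjency in aH bH} they lift, together with $e$, to a clique on $(p-1)|H|+1$ vertices in $\gh$. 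For $|H|\in\{2,3\}$ and $p=3$ this already yields $K_{5}$ or $K_{7}$, contradicting planarity. Therefore every non-identity element of $\fgh$ has order $2$, which forces $\fgh\cong\mathbb{Z}_{2}\times\cdots\times\mathbb{Z}_{2}$.

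For sufficiency, assume $|H|\in\{2,3\}$ and $\fgh\cong\mathbb{Z}_{2}^{k}$. Any two distinct non-identity elements of $\fgh$ have order $2$, and neither is a non-trivial power of the other, so the induced subgraph of $\pgh$ on non-identity vertices is edgeless; consequently $\Gamma^{*}=(2^{k}-1)\,K_{|H|}$ and
\[
\gh \;\cong\; K_{1}\vee\bigl((2^{k}-1)\,K_{|H|}\bigr).
\]
The blocks of this graph are $2^{k}-1$ copies of $K_{|H|+1}$, namely $K_{3}$ when $|H|=2$ and $K_{4}$ when $|H|=3$, all sharing the cut vertex $e$. Since $K_{3}$ and $K_{4}$ are planar, Theorem \ref{block}(i) yields $\gamma(\gh)=0$, so $\gh$ is planar.

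The main step that carries the argument is the coset blow-up identification; once it is in hand, the necessity direction reduces to locating an explicit $K_{5}$, and the sufficiency direction to a block decomposition with trivial blocks. The only subtle point is ensuring that a clique in $\pgh$ lifts to a clique in $\gh$ of the claimed size, which is exactly the content of Proposition \ref{adjency in aH bH}.
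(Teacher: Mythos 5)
The paper does not actually prove this statement; it is imported verbatim from Bhuniya and Bera \cite[Theorem 5.4]{a.bhuniya2017normal}, so there is no internal proof to compare against. Judged on its own terms, your coset blow-up identification and your sufficiency direction are correct: in $\mathbb{Z}_{2}\times\cdots\times\mathbb{Z}_{2}$ no non-identity element is a power of another, so $\gh\cong K_{1}\vee(2^{k}-1)K_{|H|}$, whose blocks are copies of $K_{3}$ or $K_{4}$, and Theorem \ref{block}(i) gives planarity.

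There is, however, a genuine gap in the necessity direction. You exclude $|H|\ge 4$ and elements of \emph{prime} order $p\ge 3$ in $\fgh$, and then conclude that every non-identity element of $\fgh$ has order $2$. That inference fails: the absence of elements of odd prime order only forces $\fgh$ to be a $2$-group, and a $2$-group may well contain elements of order $4$. Concretely, $|H|=2$ with $\fgh\cong\mathbb{Z}_{4}$ survives your argument untouched, yet there $\gh\cong K_{7}$ (Theorem \ref{completenss}), which is not planar; so the proof as written would certify a non-planar graph as planar. The repair is easy and makes your clique on the $p-1$ generators unnecessary: for any $\bar g\in\fgh$ of order at least $3$ (prime or not), the cosets $gH$ and $g^{2}H$ are distinct and adjacent in $\pgh$, so together with $e$ they lift to a clique on $2|H|+1\ge 5$ vertices, excluding all elements of order greater than $2$ in one stroke. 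This is exactly the device the paper uses elsewhere (e.g.\ in its Subcases 3.1, 4.1 and 5.1) for the genus computations.
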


\subsection{Proof of Theorems \ref{genus}-\ref{cross-cap}}
    We prove both of the main results through the following cases.\\
  \noindent{\textbf{Case-1:}}  $|H|= 2$. We discuss this case in the following subcases.

  \textbf{Subcase 1.1:} \emph{$\fgh$ has an element of order greater than $6$}. Consider ${aH} \in \fgh$  such that ${o(aH)>6}$. Then the number of generators of the subgroup $\langle {aH} \rangle$ is greater than or equal to $4$. Let $aH, a^{i}H, a^{j}H$ and $ a^{k}H$ be four generators of $\langle {aH} \rangle$. Then all of them are adjacent to one another in the power graph $\pgh$. By Propositions \ref{adjency in aH bH} and \ref{adjancy in P_E(G/H}, the subgraph of $\gh$ induced by the set ${aH \cup a^{i}H \cup a^{j}H \cup a^{k}H \cup \{e\}}$ is isomorphic to $K_{9}$. Thus, $\gamma(\gh)\geq 3$ and $\overline{\gamma}(\gh)\geq 5$ (see Theorems \ref{genuscondition} and \ref{block}).
  
  \textbf{Subcase 1.2:} \emph{$\fgh$ has an element of order $6$}. Let $aH \in \fgh$ such that $o(aH)=6$. In the power graph $\pgh$, the generators $aH$ and $a^{5}H$ are adjacent to all the other elements of $\langle aH \rangle$. 
  Also, ${a^2H} \sim {a^{4}H }$ in $\pgh$. The subgraph of $\gh$ induced by the set ${aH \cup a^{2}H \cup a^{4}H \cup a^{5}H \cup \{e\}}$ is isomorphic to $K_{9}$. Consequently,  $\gamma(\gh)\geq 3$ and $\overline{\gamma}(\gh)\geq 5$.

  \textbf{Subcase 1.3:} \emph{$\fgh$ has an element of order $5$}. By the similar argument used in \textbf{Subcase 1.1}, the graph $\gh$ has a subgraph isomorphic to $K_{9}$. Thus, $\gamma(\gh)\geq 3$ and $\overline{\gamma}(\gh)\geq 5$.
  
  \textbf{Subcase 1.4} $\pi_{\fgh}\subseteq \{1,2,3,4\}$.  In view of Theorem 2.5, we discuss the following subcases.
  
 \textbf{Subcase 1.4(a)}\emph{ $\frac{G}{H}$ has atleast $4$ cyclic subgroups of order $3$}.
 Let ${a_1H}, a_2H, a_3H$ and $a_4H$ be the generators of four cyclic subgroups of $\fgh$. Then for distinct $i,j \in \{ 1,2,3,4\}$, ${{a_iH} \sim {a^{2}_{i}H}}$ and ${a_iH \nsim a_{j}H}$ in $\pgh$. It follows that the subgraph of $\gh$ induced by the set $( \bigcup\limits_{i=1}^{4}a_{i}H) \cup ( \bigcup\limits_{i=1}^{4}a_{i}^{2}H) \cup \{e\}$ is isomorphic to $ K_{1}\vee 4K_{4} $. Therefore, $\gamma(\gh) \geq \gamma(K_{1} \vee 4K_{4}) = 4$ and $\overline{\gamma}(\gh)\geq 4$.
 
  \textbf{Subcase 1.4(b)} \emph{$\frac{G}{H}$ has precisely one cyclic subgroup of order $3$.} By Lemma  \ref{uniquesubgroup3}, either $\fgh \cong \mathbb{Z}_3$ or $\fgh \cong  S_3$. If $\fgh \cong \mathbb{Z}_3$, then $\fgh$ contains two non-identity elements. Consider $aH\in \fgh$ such that $o(aH)=3$. Then $aH\sim a^2H$ in $\pgh$. Consequently, $\gh =K_5$. It follows that $\gamma(\gh)=1$ and $\overline{\gamma}(\gh)=1$. Let $\fgh \cong  S_3$. Then consider $a_1H$, $a_2H$, $a_3H$, $b_1H$, $b_2H\in \fgh$ such that $o(a_1H)=o(a_2H)=o(a_3H)=2$ and $o(b_1H)=o(b_2H)=3$. Observe that for distinct $i,j \in\{1,2,3\}$, $a_iH\nsim a_jH$ and $b_1H\sim b_2H$ in $\pgh$ . Also, for $i\in \{1,2,3\}$ and $j\in \{1,2\}$, we have $a_iH\nsim b_jH$ in $\pgh$. It follows that $\gh = K_1\vee (K_4\cup 3K_2)$. Consequently, $\gamma(\gh)=1$ and $\overline{\gamma}(\gh)=1$.
  
  \textbf{Subcase 1.5} $\fgh$ \emph{is a $2$-group with exponent $4$}. We have the following subcases.

\textbf{Subcase-1.5(a):} $\fgh$ \emph{has at least three cyclic subgroups of order} $4$. Let $a_1H, a_2H$ and $a_3H$ be generators of three distinct cyclic subgroups of order $4$ in $\fgh$. Then by Propositions \ref{adjency in aH bH} and \ref{adjancy in P_E(G/H}, the subgraph of $\gh$ induced by the set $(\bigcup\limits_{i=1}^3 a_iH)\cup (\bigcup\limits_{j=1}^3 a_j^3H)\cup a_1^2H \cup \{e\}$ contains a subgraph which is isomorphic to $K_1\vee (K_6\cup 2K_4)$. Thus, by Theorem \ref{block}, $\gamma(\gh)\geq 3$ and $\overline{\gamma}(\gh)\geq 4$.

\textbf{Subcase-1.5(b):} $\fgh$ \emph{has exactly two cyclic subgroups of order} $4$. Let $a_1H$ and $a_2H$ be generators of two distinct cyclic subgroups of order $4$ in $\fgh$. By Lemma \ref{intersection lemma}, we have $a_1^2H=a_2^2H$. Let $b_1H, b_2H, \ldots ,b_tH$ be all the other elements of order $2$ in $\fgh$. Note that for $1 \leq i \leq 2$, $1\leq j \leq 3$, we have $a_i^jH\nsim b_kH$ in $\pgh$ where $1\leq k \leq t$. Also, for $i,j\in \{1,3\}$, $a_1^iH\nsim a_2^jH$ in $\pgh$. By Propositions \ref{adjency in aH bH} and \ref{adjancy in P_E(G/H}, we obtain $\gh= K_1\vee ((K_2\vee 2K_4)\cup tK_2)$. Notice that $\gamma(\gh) = \gamma(K_3\vee 2K_4) $ and $\overline{\gamma}(\gh) = \overline{\gamma}(K_3\vee 2K_4)$. Since the graph $K_3\vee 2K_4$ has $11$ vertices and $39$ edges, by Theorem \ref{m_n_g_formula}, we have $\gamma(K_3\vee 2K_4)\geq 2$. 
The genus $2$ drawing of  $K_3\vee 2K_4$ is given in Figure \ref{fig K3_2K_4}. Thus, $\gamma(\gh) = 2$. 
  \begin{figure}[ht]
    \centering
    \includegraphics[scale=.65]{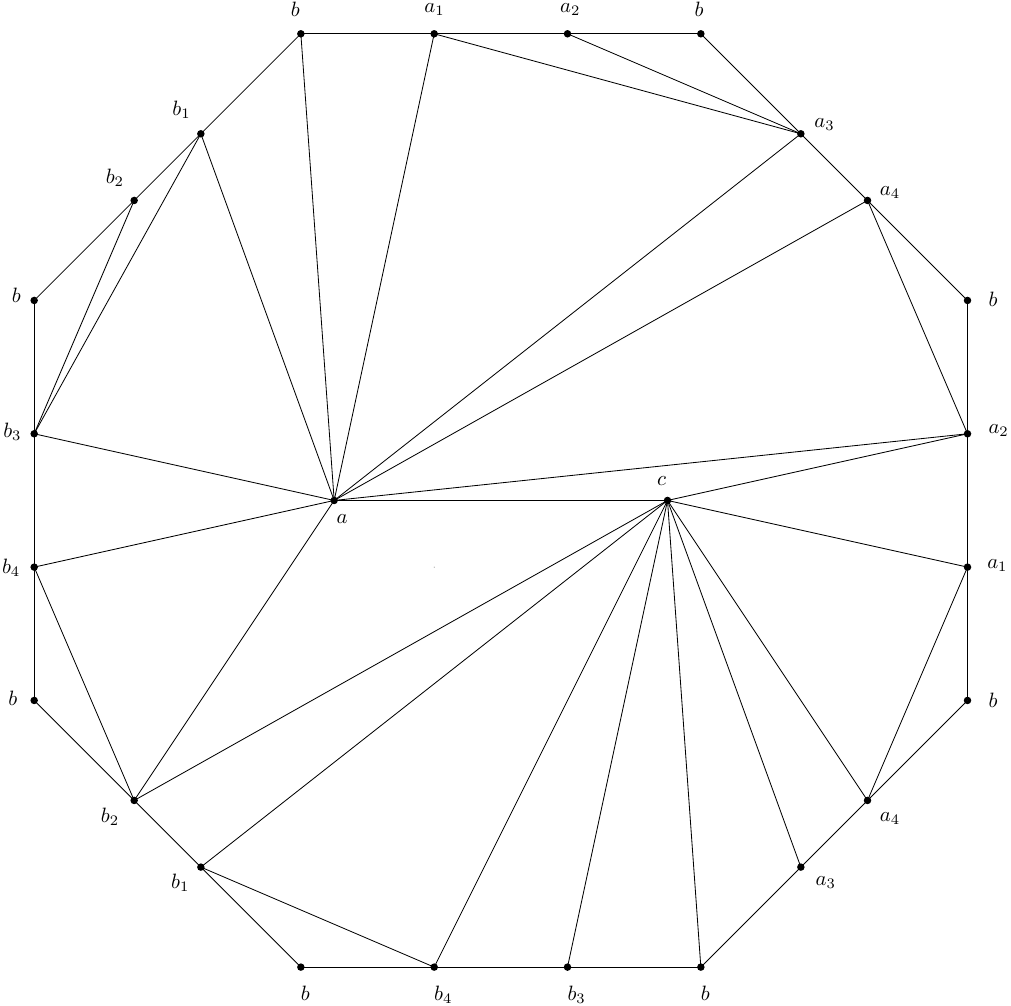}
    \caption{The genus $2$ drawing of $K_3\vee 2K_4$.}
    \label{fig K3_2K_4}
\end{figure}
 Also, $K_3\vee 2K_4$ contains a subgraph which is isomorphic to $K_7$ and so $\overline{\gamma}(\gh) \geq 3$. If $\overline{\gamma}(\gh) = 3$, then by Lemma \ref{crosscap_formula}, the number of faces in $2$-cell embedding of $K_3\vee 2K_4$ in $\mathbb{N}_3$ is $27$. It follows that $2e< 3f$, which is not possible for simple graphs. Consequently, $\overline{\gamma}(\gh) \geq 4$. 
 
 \textbf{Subcase-1.5(c):}  $\fgh$ \emph{has at exactly one cyclic subgroup of order} $4$. By Lemma \ref{lemma 4}, either $\fgh \cong \mathbb{Z}_4$ or $\fgh \cong D_8$. If $\fgh \cong \mathbb{Z}_4$, then by Theorem \ref{completenss}, $\gh\cong K_7$. Thus, $\gamma(\gh) = 1$ and $\overline{\gamma}(\gh) = 3$.  If $\fgh \cong D_8$, then by Propositions \ref{adjency in aH bH} and \ref{adjancy in P_E(G/H}, $\gh = K_1\vee (K_6 \cup 4K_2)$. Note that $\gamma (K_1\vee (K_6 \cup 4K_2)) = \gamma (K_7)$ and $\overline{\gamma} (K_1\vee (K_6 \cup 4K_2)) = \overline{\gamma} (K_7)$. It follows that $\gamma(\gh) = 1$ and $\overline{\gamma}(\gh) = 3$.

\textbf{Subcase-1.6:} $\fgh \cong \mathbb{Z}_2\times \mathbb{Z}_2 \times \cdots \times \mathbb{Z}_2$. By Theorem \ref{planarity}, the graph $\gh$ is planar.\\

\noindent{\textbf{Case-2:}}  $|H|= 3$. We discuss this case into the following subcases.

\textbf{Subcase-2.1:} \emph{$\frac{G}{H}$ has an element of order greater than $4$}. Let $aH \in \fgh$ such that $o(aH)> 4$. Then in $\pgh$, $aH \sim a^{2}H$, $aH \sim a^{4}H $ and $ a^{2}H \sim a^{4}H$ . It follows that the subgraph of $\gh$ induced by the set $aH \cup a^{2}H \cup a^{4}H \cup \{e\}$ is isomorphic to $K_{10}$. By Theorems \ref{genuscondition} and \ref{block}, we have $\gamma(\gh) \geq 4 $ and $\overline{\gamma}(\gh) \geq 7$.


\textbf{Subcase-2.2:} \emph{$\frac{G}{H}$ has an element of order $4$}. Let $aH \in \fgh$ such that $o(aH) = 4$. Then in $\pgh$, $aH \sim a^{2}H$, $aH \sim a^{3}H$ and $ a^{2}H \sim a^{3}H$. Therefore the subgraph of $\gh$ induced by $aH \cup a^{2}H \cup a^{3}H \cup \{e\}$ is isomorphic to $K_{10}$. It implies that  $\gamma(\gh) \geq 4 $ and $\overline{\gamma}(\gh) \geq 7$.

\textbf{Subcase-2.3:} $\pi_{\fgh}\subseteq \{1,2,3\}$ \emph{and $\fgh$ has an element of order $3$}. In view of Theorem \ref{number of cyclic subgroup p}, we have the following subcases.

\textbf{Subcase-2.3(a):} $\fgh$ \emph{has at least four cyclic subgroups of order} $3$. Let $a_1H$, $a_2H$, $a_3H$ and $a_4H$ be generators of four distinct cyclic subgroups of $\fgh$. Then  for $i\in \{1,2,3,4\}$, we have $a_iH\sim a_i^2H$ in $\pgh$. Further, for distinct $j,k \in \{1,2,3,4\}$ and $r,s\in \{1,2\}$, $a_j^rH\nsim a_k^sH$ in $\pgh$. By Propositions \ref{adjency in aH bH} and \ref{adjancy in P_E(G/H}, the subgraph of $\gh$ induced by the set $(\bigcup\limits_{i=1}^{4} a_iH) \cup (\bigcup\limits_{j=1}^{4} a_j^2H) \cup \{e\}$ is isomorphic to $K_1\vee 4K_6$. Consequently, $\gamma(\gh)\geq \gamma(K_1\vee 4K_6)=4$ and $\overline{\gamma}(\gh)\geq 9$ (see Theorems \ref{genuscondition} and \ref{block}).

\textbf{Subcase-2.3(b):} $\fgh$ \emph{has preciously one cyclic subgroup of order} $3$. By Lemma \ref{uniquesubgroup3}, either $\fgh \cong \mathbb{Z}_3$ or $\fgh \cong S_3$. If $\fgh \cong \mathbb{Z}_3$, then $\fgh$ has two non-identity elements. Let $aH$ and $a^2H$ be the non-identity elements of $\fgh$. Then $aH\sim a^2H$ in $\pgh$. Consequently, $\gh = K_7$ (see Propositions \ref{adjency in aH bH} and \ref{adjancy in P_E(G/H}). It follows that $\gamma(\gh)=1 $ and $\overline{\gamma}(\gh)= 3$. Let $\fgh \cong S_3$. Then consider $a_1H$, $a_2H$, $a_3H$, $b_1H$, $b_2H\in \fgh$ such that $o(a_1H)=o(a_2H)=o(a_3H)=2$ and $o(b_1H)=o(b_2H)=3$. Observe that for distinct $i,j \in\{1,2,3\}$, $a_iH\nsim a_jH$ and $b_1H\sim b_2H$ in $\pgh$. Also, for $i\in \{1,2,3\}$ and $j\in \{1,2\}$, $a_iH\nsim b_jH$ in $\pgh$. Thus, $\gh= K_1\vee (K_6\cup 3K_3)$ (see Figure \ref{fig GHS_3}). By Theorems \ref{genuscondition} and \ref{block}, $\gamma(\gh)=1 $. Also, observe that  $\overline{\gamma}(\gh)= \overline{\gamma}(K_7)= 3$.

   \begin{figure}[ht]
    \centering
    \includegraphics[scale=.9]{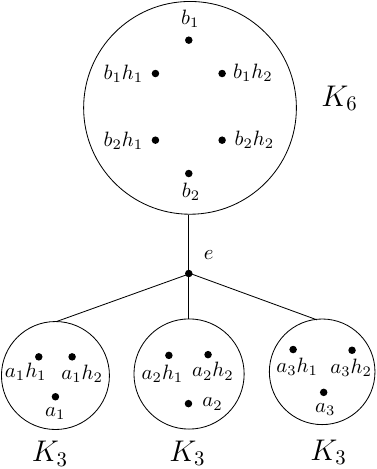}
    \caption{The graph $\gh$, where $\fgh \cong S_3$ and $|H|=3$.}
    \label{fig GHS_3}
\end{figure}

\textbf{Subcase-2.4:} $\fgh \cong \mathbb{Z}_2\times \mathbb{Z}_2 \times \cdots \times \mathbb{Z}_2$. By Theorem \ref{planarity}, the graph $\gh$ is planar.\\

\noindent{\textbf{Case-3:}} $|H|= 4
$. We have the following subcases.

\textbf{Subcase-3.1:} $\fgh$ \emph{contains an element of order greater than $2$}. Let $aH\in \fgh$ such that $o(aH)> 2$. Then $aH\neq a^2H$ and $aH\sim a^2H$ in $\pgh$. The subgraph of $\gh$ induced by the set $aH\cup a^2H \cup \{e\}$ is isomorphic to the complete graph $K_{9}$. Consequently, we obtain  $\gamma(\gh)\geq 3$ and $\overline{\gamma}(\gh)\geq 4$.\\

\textbf{Subcase-3.2:} $\fgh \cong \mathbb{Z}_2\times \mathbb{Z}_2 \times \cdots \times \mathbb{Z}_2$ $k\emph{-times}$, where $k$ $\geq$ 3. Consider the four non-identity elements $a_1H$, $a_2H$, $a_3H$ and $a_4H$ of $\fgh$. Note that for distinct $i,j \in \{1,2,3,4\}$, $a_iH\nsim a_jH$ in $\pgh$. It implies that the subgraph of $\gh$ induced by the set $\bigcup\limits_{i=1}^4 a_iH\cup \{e\}$ is isomorphic to the graph $K_1\vee 4K_4$. Thus, by Theorems \ref{genuscondition} and \ref{block}, $\gamma(\gh)\geq 4\gamma(K_5) = 4$ and $\overline{\gamma}(\gh)\geq 4$.

\textbf{Subcase-3.3:} $\fgh \cong \mathbb{Z}_2\times \mathbb{Z}_2$. Let $a_1H$, $a_2H$ and $a_3H$ be the three non-identity elements of $\fgh$. Observe that for distinct $i,j \in \{1,2,3\}$, we have $a_iH\nsim a_jH$ in $\pgh$. Consequently, $\gh= K_1\vee 3K_4$ and so $\gamma(\gh)=3$ and $\overline{\gamma}(\gh)= 3$.

\textbf{Subcase-3.4:} $\fgh\cong \mathbb{Z}_2$. Let $aH$  be the non-identity element of $\fgh$. Then $V(\gh)= aH \cup \{e\}$. It follows that $\gh = K_5$. Thus, $\gamma(\gh)=1$ and $\overline{\gamma}(\gh)= 1$. \\



\noindent{\textbf{Case-4:}} $|H|= 5$. We divide this case into the following subcases.

\textbf{Subcase-4.1:} $\fgh$ \emph{contains an element of order greater than} $2$. By the similar argument used in \textbf{Subcase 3.1}, the graph $\gh$ contains a subgraph which is isomorphic to the complete graph $K_{11}$. It follows that  $\gamma(\gh)\geq 5$ and $\overline{\gamma}(\gh)\geq 10$.

\textbf{Subcase-4.2:} $\fgh \cong \mathbb{Z}_2\times \mathbb{Z}_2 \times \cdots \times \mathbb{Z}_2$ $k\emph{-times}$, where $k\geq 3$. 
 In the similar lines of \textbf{Subcase 3.2}, we obtain $\gamma(\gh)\geq 4\gamma(K_6)=4$ and $\overline{\gamma}(\gh)\geq 4$.

\textbf{Subcase-4.3:} $\fgh \cong \mathbb{Z}_2\times \mathbb{Z}_2$. Let $a_1H$, $a_2H$ and $a_3H$ be the three non-identity elements of $\fgh$. Observe that $a_iH\nsim a_jH$ in $\pgh$ for distinct $i,j \in \{1,2,3\}$. Consequently, $\gh= K_1\vee 3K_5$ and so $\gamma(\gh)=3$ and $\overline{\gamma}(\gh)= 3$.


\textbf{Subcase-4.4:} $\fgh\cong \mathbb{Z}_2$. Let $aH$  be the non-identity element of $\fgh$. Then $V(\gh)= aH \cup \{e\}$. It follows that $\gh = K_6$. Thus, $\gamma(\gh)=1$ and $\overline{\gamma}(\gh)= 1$. \\

\noindent{\textbf{Case-5:}} $|H|= 6$. We discuss this case through the following subcases.

\textbf{Subcase-5.1:} \emph{$\frac{G}{H}$ contains an element of order greater than $2$}. Let $aH$ be an element of order greater than $2$ in $\frac{G}{H}$. Then $aH \neq a^{2}H$ and $aH \sim a^{2}H$ in $\pgh$. The subgraph of $\gh$ induced by the set $aH \cup a^{2}H \cup \{e\}$ is isomorphic to the complete graph $K_{13}$. Consequently, $\gamma(\gh)\geq 8$ and $\overline{\gamma}(\gh)\geq 15$.

\textbf{Subcase-5.2:} $\fgh \cong \mathbb{Z}_2\times \mathbb{Z}_2 \times \cdots \times \mathbb{Z}_2$  $k\emph{-times}$, where $k\geq 2$. Let $a_{1}H, a_{2}H$ and $a_{3}H$ be three non-identity elements of $\frac{G}{H}$. Note that $a_{i}H \nsim a_{j}H$ in $\pgh$ for distinct $i,j \in \{1,2,3\}$. It implies that the subgraph of $\gh$ induced by the set $a_{1}H \cup a_{2}H \cup a_{3}H \cup \{e\}$ is isomorphic to the graph $K_{1} \vee 3K_{6}$. Thus, by Theorems \ref{genuscondition} and \ref{block}, $\gamma(\gh) \geq 3\gamma(K_{7}) =3$ and $\overline{\gamma}(\gh)\geq 7$.

\textbf{Subcase-5.3:} $\fgh\cong \mathbb{Z}_2$. Let $aH$  be the non-identity element of $\fgh$. Then $V(\gh)= aH \cup \{e\}$. It follows that $\gh = K_7$. Thus, $\gamma(\gh)=1$ and $\overline{\gamma}(\gh)= 3$.\\ 

\noindent{\textbf{Case-6:}} $|H|=7$. This case is elaborated in the subsequent two subcases.

\textbf{Subcase-6.1:} $|\frac{G}{H}| \geq 3$. Let $aH$ and $bH$ be two non-identity elements of $\frac{G}{H}$. If $aH \sim bH$ in $\pgh$, then by Propositions \ref{adjancy in P_E(G/H} and \ref{adjency in aH bH}, the subgraph of $\gh$ induced by the set $aH \cup bH \cup \{e\}$ is isomorphic to the complete graph $K_{15}$. Thus, $\gamma(\gh) \geq 11$ and $\overline{\gamma}(\gh) \geq 22$. Now suppose $aH \nsim bH$ in $\pgh$. By Theorems \ref{genuscondition} and \ref{block}, we obtain $\gamma(\gh) \geq 2\gamma(K_{8})=4$ and $\overline{\gamma}(\gh)\geq 8$.

\textbf{Subcase-6.2:} $\fgh\cong \mathbb{Z}_2$. Let $aH$ be the non-identity element of $\fgh$. Then $V(\gh) = aH \cup \{e\}$ and so $\gh = K_{8}$. Thus $\gamma(\gh) = 2$ and $\overline{\gamma}(\gh) = 4$.\\ 

\noindent{\textbf{Case-7:}} $|H|\geq 8$. Now, let $a \in G\setminus{H}$. Then $aH \neq H$ and $|aH| \geq 8$. Thus the subgraph of $\gh$ induced by the set $aH \cup \{e\}$ is isomorphic to the complete graph $K_{n},$ where $n \geq 9$. Consequently, $\gamma(\gh) \geq 3$ and $\overline{\gamma}(\gh) \geq 5$.

\section*{Declarations}

\textbf{Funding}: The first author gratefully acknowledges for providing financial support to CSIR (09/719(0110)/2019-EMR-I) government of India. The second author and the third author wishes to acknowledge the support of Core Research Grant (CRG/2022/001142) funded by  SERB.

\textbf{Conflicts of interest/Competing interests}: There is no conflict of interest regarding the publishing of this paper. 

\vspace{.3cm}
\textbf{Availability of data and material (data transparency)}: Not applicable.

\vspace{.3cm}
\textbf{Code availability (software application or custom code)}: Not applicable.


\vspace{1cm}
\noindent
{\bf Parveen\textsuperscript{\normalfont 1}, \bf Manisha\textsuperscript{\normalfont 1}, \bf Jitender Kumar\textsuperscript{\normalfont 1}}
\bigskip

\noindent{\bf Addresses}:

\vspace{5pt}

\end{document}